\theoremstyle{plain}
\newtheorem{theorem}{Theorem}[section]
\newtheorem{lemma}[theorem]{Lemma}
\newtheorem{corollary}[theorem]{Corollary}
\newtheorem{proposition}[theorem]{Proposition}
\theoremstyle{definition}
\newtheorem{hypothesis}[theorem]{Hypothesis}
\numberwithin{equation}{subsection}
\newcommand{\cc}{{\mathbb C}}
\newcommand{\pp}{{\mathbb P}}
\newcommand{\rr}{{\mathbb R}}
\newcommand{\qq}{{\mathbb Q}}
\newcommand{\zz}{{\mathbb Z}}
\newcommand{\nn}{{\mathbb N}}
\newcommand{\ff}{{\mathbb F}}
\newcommand{\Gm}{{\mathbb G}_{\rm m}}
\newcommand{\aaaa}{{\mathbb A}}
\newcommand{\hhh}{{\mathbb H}}
\newcommand{\Oo}{{\mathcal O}}
\newcommand{\Rr}{{\mathcal R}}
\newcommand{\Vv}{{\mathcal V}}
\newcommand{\Ll}{{\mathcal L}}
\newcommand{\Ee}{{\mathcal E}}
\newcommand{\srD}{{\mathscr D}}
\newcommand{\srR}{{\mathscr R}}
\newcommand{\srG}{{\mathscr G}}
\newcommand{\srM}{{\mathscr M}}
\newcommand{\srU}{{\mathscr U}}
\newcommand{\srMtilde}{\widetilde{\srM}}
\newcommand{\Mtilde}{\widetilde{M}}
\newcommand{\eop}{\hfill$\Box$}
\title[Twistor geometry of parabolic structures]{The twistor geometry of parabolic structures in rank two}
\author{Carlos Simpson}
\date{} 
\keywords{Parabolic structure, Moduli space, Twistor space, Logarithmic connection, Higgs bundle}
\subjclass[2010]{Primary 14D21, 32J25; Secondary 14C30, 14F35}
\begin{document}

\begin{abstract}
Let $X$ be a quasi-projective curve, compactified to $(Y,D)$ with $X=Y-D$. 
We construct a Deligne-Hitchin twistor space out of moduli spaces of 
framed $\lambda$-connections of rank $2$ over $Y$ with logarithmic singularities and quasi-parabolic structure along $D$. 
To do this, one should divide by a Hecke-gauge groupoid. 
Tame harmonic bundles on $X$ give preferred sections, and the relative tangent bundle along a preferred section has a mixed 
twistor structure with weights $0,1,2$. The weight $2$ piece corresponds to the deformations of the KMS structure
including parabolic weights and the residues of the $\lambda$-connection. 
\end{abstract}

\maketitle



\section{Introduction}

In \cite{Hitchin}, Hitchin gave a hyperkähler structure on the moduli space of local
systems over a smooth compact Riemann surface. By Penrose theory, this leads to 
a {\em twistor space}. Deligne gave an interpretation of the construction of the twistor
space in terms of the moduli space of $\lambda$-connections. 
This viewpoint 
is amenable to generalization to the case of quasiprojective varieties. 
For rank $1$ local
systems on an open curve, a weight $2$ property  
for the local monodromy transformations around the punctures came into view \cite{weighttwo}, and this
was related to parabolic structures. 

In the present paper, we would like to consider how to move to higher rank local systems on an open curve. 
We'll look at the case of rank $2$ bundles with logarithmic connection. The fundamental picture
is an interplay between the notions of quasi-parabolic structure and parabolic structure, that
were introduced by Seshadri in \cite{Seshadri} and developed further in \cite{MehtaSeshadri} and the
subsequent literature.

Before stating the results in \S \ref{secresults}, 
we'll first have a look at the classical twistor space theory, the original case of a compact Riemann surface,
then the rank $1$ case over a quasiprojective curve. 

\subsection{Twistor space}

Let $\hhh = \rr \langle 1,I,J,K\rangle$ be the algebra of quaternions. The
space of complex structures $\kappa \in \hhh , \;\; \kappa ^2 = -1$ is identified with the two-sphere $S^2$
and provided with a complex structure making it into $\pp^1$. 
With coordinate on the main chart $\aaaa^1$
denoted by $\lambda$, the complex structure $I$ corresponds to $\lambda = 0$ and $J$ corresponds to  $\lambda = 1$. 
The antipodal involution $\kappa \mapsto -\kappa$ corresponds to $\sigma : \lambda \mapsto -\overline{\lambda}^{-1}$,
it is a real structure on $\pp^1$ with empty set of real points. 

Suppose $V$ is an $\hhh$-module i.e.\  a quaternionic vector space.
The product $V\times \pp^1$ has a global complex structure inducing $\kappa$ on $V\times \{\kappa \}$, 
making it into the total space of a vector bundle $\Vv/\pp^1$. 
We call the sections of $\Vv$ of the form $\{ v\} \times \pp^1$ the {\em preferred sections} . 

\begin{proposition}
\label{linearprop}
This {\em twistor space} construction is an equivalence between finite dimensional quaternionic vector spaces, and
bundles $\Vv / \pp^1$, provided with an involution covering $\sigma$, such that $\Vv$ is semistable
of slope $1$ (i.e.\ $\Vv \cong \Oo _{\pp^1}(1)^{\oplus 2d}$). The underlying $\rr$-vector space
is recovered as $V = \Gamma (\pp^1,\Vv ) ^{\sigma}$.
For each $\kappa \in \pp^1$, the projection $V\rightarrow \Vv _{\kappa}$ is an isomorphism of real vector spaces 
inducing the complex structure $\kappa$ on $V$. 
\end{proposition}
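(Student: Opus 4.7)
The plan is to construct mutually inverse functors in the two directions, and then to verify the structural assertions via a dimension count and an explicit trivialization.

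In the forward direction, fix a quaternionic vector space $V$ of real dimension $4d$, realized concretely as $(V,I) \cong \cc^{2d}$ with an $I$-antilinear endomorphism $J$ satisfying $J^2 = -1$. On $V \times \pp^1$ I would define the almost complex structure whose $(0,1)$-tangents at $(v,\lambda)$ consist of the antiholomorphic direction in $\pp^1$ together with the $\kappa(\lambda)$-antiholomorphic directions in $V$, using the standard sweep $\kappa : \pp^1 \to S^2 \subset \hh$ with $\kappa(0)=I$ and $\kappa(1)=J$. Integrability is automatic because $\pp^1$ has complex dimension one. To identify $\Vv$ as an algebraic bundle, I would reduce to the case $V = \hh$ by decomposing $V$ into quaternionic lines, and show directly by a $\delbar$-calculation on the two affine charts that each quaternionic line produces $\Oo_{\pp^1}(1)^{\oplus 2}$. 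The antiholomorphic map $(v,\kappa) \mapsto (v,-\kappa)$ lifts to the desired $\sigma$-involution on $\Vv$, and fixes the preferred sections.

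For the backward direction, set $V := \Gamma(\pp^1,\Vv)^\sigma$. The ambient space of sections has complex dimension $4d$, so $V$ has real dimension $4d$. For any $\kappa$, the evaluation $\mathrm{ev}_\kappa : V \to \Vv_\kappa$ is a real-linear map between spaces of real dimension $4d$; injectivity follows because a $\sigma$-fixed section vanishing at $\kappa$ must also vanish at $\sigma(\kappa) = -\overline{\kappa}^{-1} \ne \kappa$, and any section of $\Oo_{\pp^1}(1)^{\oplus 2d}$ vanishing at two distinct points is zero. Hence $\mathrm{ev}_\kappa$ is a real isomorphism, transporting the complex structure of $\Vv_\kappa$ to a real-linear complex structure $I_\kappa$ on $V$. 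The same dimension count shows, in the forward direction, that the embedding $v \mapsto \{v\}\times \pp^1$ of $V$ into $\Gamma(\pp^1,\Vv)^\sigma$ is an isomorphism.

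The main obstacle is to promote the family $\{I_\kappa\}$ to a genuine $\hh$-module structure on $V$, equivalently, to show that $I := I_0$ and $J := I_1$ anticommute. I would do this by putting $(\Vv,\sigma)$ in a standard form: fix a $\sigma$-compatible isomorphism $\Vv \cong \Oo_{\pp^1}(1)^{\oplus 2d}$, write a section as $\lambda \mapsto a + \lambda b$ with $(a,b) \in \cc^{2d}\oplus \cc^{2d}$, determine the $\sigma$-fixed real locus explicitly, and then read off $I_0$ and $I_1$ from the evaluations $s \mapsto a$ and $s \mapsto a + b$. A direct calculation in this model yields $I_0 I_1 + I_1 I_0 = 0$ and shows that $\kappa \mapsto I_\kappa$ traces out the two-sphere of imaginary unit quaternions inside $\mathrm{End}_\rr(V)$. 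Once anticommutation is in hand, functoriality on both sides is immediate, the two constructions are inverse, and the remaining assertion about $\mathrm{ev}_\kappa$ is just the identification already used.
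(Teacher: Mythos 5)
The paper states this proposition without proof --- it is treated as classical twistor-theoretic background (implicitly from \cite{HKLR}) --- so there is no argument of the author's to compare against. Your sketch is the standard one and is sound in outline: the reduction to $\hh$ and the explicit $\delbar$-computation in the forward direction, the dimension count plus the two-point vanishing argument for injectivity of $\mathrm{ev}_\kappa$ in the backward direction, and the explicit model $s(\lambda)=a+\lambda b$ for extracting the quaternionic structure are all exactly how one proves this. Two places deserve more care. First, ``integrability is automatic because $\pp^1$ has complex dimension one'' is not a valid reason by itself --- the fiberwise complex structure varies with $\lambda$, so the product almost complex structure is not obviously integrable; in the linear case integrability does hold, but the honest justification is precisely the explicit holomorphic chart computation you propose, so you should let that computation carry the weight rather than the dimension remark. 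Second, the phrase ``fix a $\sigma$-compatible isomorphism $\Vv\cong\Oo_{\pp^1}(1)^{\oplus 2d}$'' hides the real content of the backward direction: writing $\Vv\cong\Oo_{\pp^1}(1)\otimes\cc^{2d}$ and choosing a reference antiholomorphic lift $\tau$ of $\sigma$ to $\Oo_{\pp^1}(1)$ (which necessarily satisfies $\tau^2=-1$, since the degree is odd), the given involution differs from $\tau\otimes 1$ by a constant matrix $A\in GL(2d,\cc)$, and the condition $\tilde\sigma^2=+1$ becomes $A\bar A=-1$, i.e.\ an honest quaternionic structure on the multiplicity space $\cc^{2d}$. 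This is simultaneously where the evenness of the rank and the anticommutation $I_0I_1+I_1I_0=0$ come from, so it should be made explicit rather than folded into the choice of a ``standard form.'' With those two points spelled out, your argument is complete.
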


This linear picture extends to the nonlinear case \cite{HKLR}. If $M$ is an integrable quaternionic manifold $M$, its {\em twistor space} is 
$$
Tw(M):= M\times \pp^1
$$
with complex structure and antilinear involution $\sigma $ obtained in the same way. 
The horizontal ``preferred sections'' $\{ m\} \times \pp^1$ are holomorphic, $\sigma$-invariant, 
and for any $\kappa \in \pp^1$ the fiber $M\times \{ \kappa \}$ is given
the complex structure $M_{\kappa}$ determined by the action of $\kappa \in \hhh$ on the tangent spaces of $M$. 
The ``integrability'' condition says that these are integrable complex structures, and the general Penrose theory yields an
integrable complex structure on the total space $Tw(M)$. 

The relative tangent bundle along a preferred section is the vector bundle given by Proposition \ref{linearprop}, in particular it
is semistable of slope $1$---we say it has {\em weight $1$}. Locally around a preferred section, the $\sigma$-invariant sections are all
preferred sections, and a neighborhood in the space of these sections maps isomorphically
to a neighborhood in any one fiber $M_{\kappa}$ thanks to the weight $1$ property. These however
can fail for general $\sigma$-invariant sections not assumed to be near to preferred ones. A general study is given in  \cite{BeckBiswasHellerRoser,BeckHellerRoser,BiswasHellerRoser}.

\subsection{Hitchin's twistor space in the compact case} 

The moduli space $M$ of local systems on a smooth
compact Riemann surface $X$ has a quaternionic and indeed hyperkähler structure \cite{Hitchin}, the latter meaning that there is a Riemannian
metric Kähler for all the complex structures. We obtain the twistor space $Tw(M)$. In what follows, we look only at the smooth
points of the moduli space without further mention.

For the complex structure $\kappa = I$ corresponding to $\lambda = 0$ in $\pp^1$,
the complex moduli space $M_0$ is the moduli space of Hitchin pairs or ``Higgs bundles'' $(E,\varphi )$ on $X$. 
We may call this the {\em Dolbeault moduli space} denoted $M_{\rm Dol}$ in view of the analogy with Dolbeault cohomology. 
For the complex structure $\kappa =J$ corresponding to $\lambda = 1$ in $\pp^1$,
the complex moduli space $M_1$ is the moduli space of vector bundles with integrable connection $(E,\nabla )$ on $X$. 
We call this the {\em de Rham moduli space} denoted $M_{\rm dR}$ in view of the analogy with de Rham cohomology.

Furthermore, for all the complex structures $\kappa$ corresponding to $\lambda \neq 0,\infty$ the moduli spaces
$M_{\kappa}$ are naturally isomorphic, so they are all isomorphic to $M_{\rm dR} = M_1$, 
which is in turn analytically isomorphic to the ``Betti'' moduli space of
local systems or representations of the fundamental group.

Deligne, having discussed with Witten, gave a reinterpretation of $Tw(M)$ as follows. Each $M_{\lambda}$ is the moduli space of 
{\em vector bundles with $\lambda$-connection} $(E,\nabla )$. For $\lambda \neq 0$ the rescaling $\lambda ^{-1}\nabla$
is just a connection, yielding the isomorphisms refered to above, whereas for $\lambda = 0$ a $\lambda$-connection is
the same thing as a Higgs field $\varphi$.

We may make an algebraic geometry construction of the family of moduli spaces over $\aaaa^1\subset \pp^1$, which
for reasons of analogy with the Dolbeault and de Rham terminology, we call $M_{\rm Hod}$ for Hodge. 
This space together with its $\cc^{\ast}$-action may be 
viewed as the ``Hodge filtration'' relating de Rham to Dolbeault \cite{hfnac}.

The isomorphisms between different nonzero $\lambda \in \aaaa^1-\{ 0\} = \Gm$ fit together to give an
analytic trivialization
$$
\left( \Gm  \times _{\aaaa^1} M_{\rm Hod}  \right) ^{\rm an}\cong 
\left( \Gm \times M_B \right) ^{\rm an}
$$
where $M_B$ is the ``Betti'' moduli space of representations of the fundamental group.

Then, the condition of existence of an antipodal involution covering $\sigma$ motivated Deligne to define a {\em glueing} between
$M_{\rm Hod}(X)$ and  $M_{\rm Hod}(\overline{X})$ using the isomorphism
$$
\pi _1(X)\cong \pi _1(\overline{X}) \;\;\; \mbox{ whence } \;\;\; M_B(X)\cong M_B(\overline{X})
$$
and applying the involution $\lambda \mapsto -\lambda ^{-1}$ on $\Gm$. 
Glueing the two pieces together yields a space 
$$
M_{\rm Hod}(X) \cup M_{\rm Hod}(\overline{X}) \;\; =: \;\; 
M_{\rm DH} \rightarrow \pp^1
$$
and one can define an antipodal involution using that $\overline{X}$ is the complex conjugate variety to $X$ and the fact that the
moduli space $M_{\rm Hod}$ is a canonical algebraic geometry construction so it supports a complex conjugation operation.

\begin{proposition}
The Deligne-Hitchin moduli space constructed by Deligne's glueing is isomorphic to the twistor space for Hitchin's quaternionic 
structure:
$$
\begin{array}{ccccc}
M_{\rm DH} & &\cong && Tw(M)\\
& \searrow &&\swarrow & \\
&& \pp^1 && 
\end{array}
$$
\end{proposition}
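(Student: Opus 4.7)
The plan is to construct a canonical biholomorphism between the two spaces over $\pp^1$ by checking fiberwise agreement, promoting it to a global holomorphic map, and verifying compatibility of the antipodal involutions.

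First I would work chart by chart over $\pp^1$. On the chart $\aaaa^1 \subset \pp^1$, the twistor fiber $Tw(M)_\lambda = M_\lambda$ carries the complex structure $\kappa(\lambda) \in \hhh$ acting on tangent spaces. By the nonabelian Hodge theorem for the compact curve $X$, at $\lambda = 0$ (i.e.\ $\kappa = I$) this is the Dolbeault moduli of Higgs bundles $M_{\rm Dol}$, and at $\lambda \neq 0$ the rescaling $\nabla \mapsto \lambda \nabla$ identifies $M_\lambda$ with the moduli of vector bundles with $\lambda$-connection. This gives a canonical pointwise identification $Tw(M)_\lambda \cong M_{\rm Hod}(X)_\lambda$. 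On the chart $\pp^1 \setminus \{0\}$ one uses instead the conjugate curve $\overline{X}$; the relation $\kappa(\sigma(\lambda)) = -\kappa(\lambda)$ matches the sign change of the Higgs field under the complex conjugation of harmonic bundles.

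Next I would promote these pointwise identifications to biholomorphisms. On the overlap $\Gm$, Deligne's construction of $M_{\rm DH}$ uses precisely the analytic trivialization
$$
\left( \Gm \times_{\aaaa^1} M_{\rm Hod} \right)^{\rm an} \cong \left( \Gm \times M_B \right)^{\rm an}
$$
that sends a $\lambda$-connection $\nabla$ to the monodromy representation of $\lambda^{-1}\nabla$. On the Penrose side, the horizontal preferred sections $\{m\} \times \pp^1$ are $\sigma$-invariant holomorphic sections, and over $\Gm$ they trivialize the family by the harmonic-bundle-to-local-system correspondence applied to each $m \in M$. The content of Simpson's nonabelian Hodge theorem in the compact case is that these two trivializations agree, which patches the fiberwise isomorphisms into a biholomorphism over $\aaaa^1$, and symmetrically over $\pp^1 \setminus \{0\}$. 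The Betti trivialization on $\Gm$ is shared by both constructions, so the charts glue to a biholomorphism over all of $\pp^1$.

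Finally I would verify the two antipodal involutions match. Both cover $\sigma : \lambda \mapsto -\overline{\lambda}^{-1}$ on $\pp^1$. On $Tw(M) = M \times \pp^1$ the involution is the tautological one induced by $v \mapsto -v$ on $\hhh$; on $M_{\rm DH}$ it is Deligne's gluing involution, built from the canonical complex conjugation of the algebraic construction of $M_{\rm Hod}$ together with the identification $\pi_1(X) \cong \pi_1(\overline{X})$. After passing to the Betti trivialization over $\Gm$, both become the map $(\rho, \lambda) \mapsto (\rho, \sigma(\lambda))$, because a harmonic bundle on $X$ and its conjugate on $\overline{X}$ yield the same underlying local system once one identifies the fundamental groups.

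The main obstacle is the holomorphicity assertion in the second step: one must show that the family of complex structures on $M$ arising from Penrose's construction applied to Hitchin's hyperkähler metric is really the same analytic family over $\aaaa^1$ as the algebraic $\lambda$-connection family $M_{\rm Hod}$. Pointwise this reduces to matching complex structures on tangent spaces through Kodaira-Spencer, but across the family it is the technical heart of nonabelian Hodge theory in the compact case and uses the full strength of the analytic identification $M_{\rm Hod}^{\rm an}|_{\Gm} \cong \Gm \times M_B^{\rm an}$. Granting this input, the rest of the proof is a bookkeeping exercise of charts and real structures.
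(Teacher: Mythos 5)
The paper states this proposition purely as background and gives no proof of it (there is not even an end-of-proof mark after the statement); it is Deligne's observation, worked out in detail in \cite{hfnac}. So there is no in-paper argument to measure yours against, and I can only assess the sketch on its own terms. As an outline it follows the standard route and correctly isolates the technical heart: one must show that the map sending a solution of Hitchin's equations to the $\lambda$-connection $\left(\overline{\partial}+\lambda\varphi^{\dagger},\,\lambda\partial+\varphi\right)$ is, for each fixed $\lambda$, a biholomorphism from $(M,I_{\lambda})$ onto $M_{{\rm Hod},\lambda}$, varying holomorphically in $\lambda$; surjectivity is the existence of harmonic metrics, and holomorphicity in $m$ is the tangent-space computation you allude to via Kodaira--Spencer. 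Deferring that to the compact nonabelian Hodge theorem is reasonable for a sketch of a known result.

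One step of your argument is, however, stated backwards and would not work as written. You claim that over $\Gm$ the horizontal preferred sections $\{m\}\times\pp^1$ "trivialize the family" on the Penrose side, and that the content of the theorem is that this trivialization agrees with Deligne's Betti trivialization. But the twistor space of a general hyperkähler manifold is \emph{not} trivial over $\Gm$, and the preferred sections are emphatically not constant in the Betti trivialization: the monodromy of $\partial+\overline{\partial}+\lambda^{-1}\varphi+\lambda\varphi^{\dagger}$ genuinely depends on $\lambda$. The trivialization $Tw(M)|_{\Gm}\cong\Gm\times M_B$ is an extra structure supplied by the Riemann--Hilbert correspondence (equivalently by the $\cc^{\ast}$-rescaling of connections), not by the horizontal sections, and it is a special feature of this particular hyperkähler manifold. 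The correct gluing argument is the one the paper's surrounding text suggests: build the isomorphism chart by chart over $\aaaa^1$ and over $\pp^1\setminus\{0\}$ using the harmonic-bundle parametrization on each, and check they agree over $\Gm$ because complex conjugation of a harmonic bundle on $X$ yields the harmonic bundle of the conjugate structure on $\overline{X}$ with the same underlying flat connection, after which the matching of the two antipodal involutions is as you describe.
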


The preferred sections of the twistor space correspond to sections of the fibration $M_{\rm DH} \rightarrow \pp^1$
that we also call ``preferred sections''. These are maps $\pp^1\rightarrow M_{\rm DH}$ that are obtained whenever 
we have a {\em harmonic bundle}
$$
(E,\partial, \overline{\partial}, \varphi , \varphi ^{\dagger},h )
$$
corresponding to a solution of Hitchin's equations. 
For $\lambda \in \aaaa^1$ the point in the moduli space
of holomorphic vector bundles with $\lambda$-connections is 
$$
\left( 
\Ee _{\lambda} := (E,\overline{\partial} + \lambda \varphi ^{\dagger}), \;\;
\nabla _{\lambda} := \lambda \partial + \varphi \right) .
$$
These preferred sections are compatible with the antipodal involution. 
The fact that this construction gives the twistor space of a quaternionic manifold comes from the following 
property:

\begin{proposition}
Suppose $\rho : \pp^1\rightarrow M_{\rm DH}$ is a preferred section defined as coming from a harmonic bundle in the
above way. Let 
$$
T_{\rho} =\rho ^{\ast} T(M_{\rm DH} / \pp^1 )
$$
be the pullback of the relative tangent bundle, or equivalently  the normal bundle of $M_{\rm DH}$ to the section.
Then $T_{\rho}$ is a semistable vector bundle of slope $1$ over $\pp^1$. 
\end{proposition}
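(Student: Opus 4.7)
The plan is to reduce the assertion to the linear twistor correspondence (Proposition~\ref{linearprop}) by means of the identification $M_{\rm DH}\cong Tw(M)$ furnished by the previous proposition.

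First I would observe that under this identification the preferred section $\rho$, defined from a harmonic bundle, corresponds to a horizontal section $\{m\}\times \pp^1$ of $Tw(M) = M\times \pp^1$, where $m\in M$ is the smooth point of Hitchin's moduli space determined by the solution $(E,\partial,\overline{\partial},\varphi,\varphi^{\dagger},h)$ of Hitchin's equations. Indeed, the family $(\Ee_{\lambda},\nabla_{\lambda})$ together with its conjugate under $\sigma$ is precisely the expression in the two algebraic charts $M_{\rm Hod}(X)$ and $M_{\rm Hod}(\overline{X})$ of this constant section.

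Next, Hitchin's hyperk\"ahler structure equips the tangent space $V:= T_m M$ with commuting actions of $I,J,K$, making it into a finite-dimensional quaternionic vector space. By construction of the complex structure on $Tw(M)$, the relative tangent bundle $T(Tw(M)/\pp^1)$ along $\{m\}\times \pp^1$ is precisely the bundle $\Vv\to \pp^1$ associated to $V$ by the linear twistor functor: its underlying $\rr$-bundle is the trivial bundle $V\times \pp^1$, and its complex structure in the fiber over $\kappa\in \pp^1$ is the one determined by $\kappa\in \hh$ acting on $V$. Proposition~\ref{linearprop} then says $\Vv$ is semistable of slope $1$, in fact isomorphic to $\Oo_{\pp^1}(1)^{\oplus 2d}$ with $d=\dim_{\hh} V$. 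Transporting back via $M_{\rm DH}\cong Tw(M)$ identifies $T_{\rho}$ with $\Vv$ and gives the conclusion.

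The step that requires care, and which I expect to be the main obstacle, is checking that the identification $M_{\rm DH}\cong Tw(M)$ intertwines complex structures at the level of tangent bundles: one must verify that the glueing defining $M_{\rm DH}$, produced by the Riemann-Hilbert trivialization of $M_{\rm Hod}$ over $\Gm$ together with its complex conjugate, induces on each tangent space the same rotation by $\kappa\in \hh$ used in Penrose's construction of $Tw(M)$. This is essentially a compatibility between the $\lambda$-connection parametrization of deformations at a harmonic bundle and the $\hh$-action on harmonic forms; once it is in place, semistability and the slope $1$ assertion follow formally from the linear case and require no further hyperk\"ahler analysis beyond Hitchin's original construction.
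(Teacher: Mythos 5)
Your argument is correct as far as it goes, but it takes a genuinely different route from the one the paper has in mind. You reduce the statement to Proposition \ref{linearprop} by invoking Hitchin's hyperk\"ahler metric and the identification $M_{\rm DH}\cong Tw(M)$ from the preceding proposition, so that $T_{\rho}$ becomes the linear twistor bundle of the quaternionic vector space $T_mM$. The paper instead intends a direct cohomological computation: the relative tangent space at $(\Ee_{\lambda},\nabla_{\lambda})$ is an $\mathbb{H}^1$ of the deformation complex, represented by harmonic $1$-forms of the harmonic bundle, and the transition from the chart around $\lambda=0$ to the chart around $\lambda=\infty$ rescales these $1$-forms by $\lambda^{-1}$, which is exactly the glueing cocycle of $\Oo_{\pp^1}(1)^{\oplus 2d}$. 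The difference matters for two reasons. First, the logical order: in the paper's narrative the weight~$1$ property is the \emph{input} that certifies (via the converse direction of Proposition \ref{linearprop}) that Deligne's glueing really is the twistor space of a quaternionic manifold; your proof consumes that conclusion as a hypothesis, so it is non-circular only because Hitchin's hyperk\"ahler structure and the HKLR twistor construction are available independently in the compact case --- and you correctly isolate the remaining compatibility check (that the Riemann--Hilbert glueing induces the Penrose rotation on tangent spaces) as the real content, which is essentially equivalent to the harmonic-forms computation you are trying to avoid. Second, and more importantly for this paper, your reduction does not survive the passage to the quasi-projective setting, where no hyperk\"ahler metric or ambient $Tw(M)$ is available a priori and the weight property must be extracted from the $L^2$/parabolic cohomological description (this is precisely what Section~3 does for rank~$2$). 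So your proof buys brevity in the compact case at the cost of the very mechanism the paper needs to generalize.
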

\eop

An analogy with Hodge structures motivates us to call the property of being semistable of slope $1$, a property of {\em weight $1$}.
Thus, the fact that we have a quaternionic structure on the moduli space comes from a weight $1$ property
for the tangent bundle to the preferred section. 

Fundamentally, the calculation going into the proof uses the observation that the tangent space of the moduli space is an $H^1$,
calculated by some kind of harmonic forms. Then, the fact that they are $1$-forms means that the transition functions
needed to pass from the $\aaaa^1$ neighborhood of $\lambda = 0$ to the 
$\aaaa^1$ neighborhood of $\lambda = \infty$ involve $\lambda ^{-1}$ leading to the semistable of slope $1$ property.

This weight $1$ property is the nonabelian cohomology analogue of the statement in usual Hodge theory that
$H^1_{\rm dR}(X)$ has a weight $1$ Hodge structure, and similarly for a variety over $\ff _q$
that the étale cohomology $H^1_{\rm et}(X_{\overline{\ff _q}}, \qq _{\ell})$
has weight $1$ in the sense that the eigenvalues of Frobenius have norm $q^{1/2}$.

\subsection{A weight two property in the quasiprojective case}

In usual cohomology, recall that for 
$X=\pp ^1- \{ 0,\infty \}$
the mixed Hodge structure on $H^1(X)$ is one-dimensional, of weight $2$, and in 
the arithmetic setting 
$H^1_{\rm et}(X, \qq _{\ell} ) \cong \qq _{\ell} (1)$
is a Tate twist having weight two. 
The weight two property is localized at the punctures, for example 
from arithmetic geometry the inertia group has the form of a Tate twist, so it has weight $-2$ and the space
of representations of the inertia group should be thought of as having weight $2$. 

Therefore, one may naturally conjecture that the weight $1$ property for the
twistor space would become a weight $2$ property for the local monodromy transformations around punctures. 
Pridham \cite{Pridham} indeed does this for deformations of $\ell$-adic representations. 
For the case of moduli spaces of local systems of rank $1$ over an
open curve, this was discussed in the paper \cite{weighttwo}.

Let's review some of the details under the simplifying assumption that 
$X=\pp^1 - \{ 0,\infty \}$. The only data of a local system is then a single local monodromy at 
a puncture. A line bundle on $X$ is trivial, and a logarithmic $\lambda$-connection has the form
$$
\nabla (\lambda , a) = \lambda d + a\frac{dz}{z} .
$$
There is an action of change of the trivialization making $(\lambda , a)$ equivalent to $(\lambda , a + k\lambda )$ for
any $k\in \zz$. The singularity of this action at $\lambda = 0$ is one of the difficulties of the open curve situation.

Let $\srG \cong \zz$ be this ``gauge group'' acting. We note that the gauge transformations may also be viewed as
Hecke transformations at the singular points. In this example, to maintain a trivial bundle we do simultaneous
Hecke transformations at both points, one up and one down. 

The moduli space may be described as 
$$
M_{\rm Hod}  := \aaaa^1 \times \cc / \srG 
$$
using $\aaaa^1$ for the $\lambda$ variable and $\cc$ for the coefficient $a$. 
Note that the group acts discretely over $\lambda \neq 0$ 
but the  stabilizer group of the fiber over $\lambda = 0$ is the full $\srG = \zz$.
It is therefore not completely clear what kind of structure best to accord to the quotient.
More on this aspect later.

The Riemann-Hilbert correspondance over $\lambda \neq 0$ 
sends 
the connection $\lambda ^{-1} \nabla (\lambda , a)$ to the monodromy around the loop generating $\pi _1(X)$:
$$
\Gm \times \cc / \srG \stackrel{\cong}{\longrightarrow} \Gm \times \cc^{\ast}
\;\;\;\;
(\lambda , a)\, \mapsto \,(\lambda , {\rm exp}(2\pi i a / \lambda ) ).
$$
We would like to use this identification
to glue $M_{\rm Hod}$ to the other piece in the Deligne glueing. Let $\mu := -\lambda ^{-1}$ denote the coordinate of the
other chart $\aaaa^1\subset \pp^1$. A point 
$(\mu , b)\in M_{\rm Hod}(\overline{X})$
has monodromy transformation ${\rm exp}(2\pi i b / \mu )$ along the generating loop for $\pi _1(\overline{X})$.

The
topological isomorphism $X^{\rm top} \cong \overline{X}^{\rm top}$ takes the generator to minus the generator,
so the Deligne glueing should associate $(\lambda , a)$ with $(\mu , b)$ (up to the $\srG$ action) when
$$ 
{\rm exp}(2\pi i a / \lambda )  =
{\rm exp}(- 2\pi i b / \mu ) .
$$
Lifting over the action of the gauge group, this condition becomes
$$
a/\lambda = -b/\mu = -b / (-\lambda ^{-1} ) =\lambda b,\;\;\;\; \mbox{ i.e. } \;\; a = \lambda ^2 b.
$$
This is the glueing condition for the line bundle $\Oo _{\pp^1}(2)$ over $\pp^1$, 
yielding the weight $2$ expression for the Deligne-Hitchin space:
$$
M_{\rm DH} = {\rm Tot}(\Oo _{\pp ^1} (2) ) / \srG . 
$$
There is also a natural antipodal involution, and the {\em preferred sections} are the sections that are compatible with $\sigma$. 
Recall from the compact case that we wanted to look at the space of $\sigma$-equivariant sections of 
$M_{\rm DH} /\pp^1$. Here let's lift over the action of the gauge group and look at the space of $\sigma$-equivariant sections of 
$\Oo _{\pp^1}(2)$. Before asking for $\sigma$-equivariance we have 
$\Gamma (\pp^1,\Oo _{\pp ^1}(2)) \cong \cc^3$.

\begin{lemma}
$$
\Gamma (\pp^1,\Oo _{\pp ^1}(2))^{\sigma} \cong \rr^3.
$$
For any $\kappa \in \pp^1$ the restriction morphism from these sections to the fiber
$\cc _{\kappa} = \Oo _{\pp^1}(2) _{\kappa}$ is a surjection 
$$
\rr^3 \rightarrow \cc _{\kappa}.
$$
There is a natural splitting as $\rr ^3 \cong \rr \times \cc_{\kappa}$ such that the
generator of the gauge group has the form $(1,-\lambda (\kappa ))$. 
\end{lemma}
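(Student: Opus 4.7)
The plan is to verify each of the three claims by working in affine coordinates on $\pp^1$. Write any global section as $s(\lambda) = c_0 + c_1\lambda + c_2\lambda^2$ in the $\lambda$-chart, and use the standard antipodal real structure on $\Oo_{\pp^1}(2)$ induced from the antilinear quaternionic involution $\tau : \cc^2 \to \cc^2$ (with $\tau^2 = -\mathrm{id}$) that covers $\sigma$.

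For the first claim, a direct computation of $\sigma^* s$ in these coordinates produces a polynomial in $\lambda$ whose coefficients are conjugate-linear in $c_0, c_1, c_2$. The fixed-point equation $\sigma^* s = s$ then translates into two real linear conditions relating $c_0$ and $c_2$, together with one real condition on $c_1$, cutting out a real $3$-dimensional subspace.

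For the second claim, I use that $\sigma$ is fixed-point-free on $\pp^1$, so $\sigma(\kappa) \neq \kappa$, and a $\sigma$-invariant section vanishing at $\kappa$ automatically vanishes at $\sigma(\kappa)$. Hence the kernel of the restriction $\rr^3 \to \cc_\kappa$ is canonically identified with $\Gamma(\pp^1, \Oo(2)(-\kappa - \sigma(\kappa)))^\sigma = \Gamma(\pp^1, \Oo_{\pp^1})^\sigma \cong \rr$, the last isomorphism being the real constants. A real dimension count gives an image of real dimension $3 - 1 = 2 = \dim_\rr \cc_\kappa$, so the restriction is surjective.

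For the third claim, the $\rr$-factor of the splitting is taken to be the kernel from the previous step, with its canonical generator $1 \in \Gamma(\pp^1, \Oo_{\pp^1})^\sigma$. The $\cc_\kappa$-factor is realized via the restriction map together with a natural section determined by the $\sigma$-structure. To verify the form of the gauge generator, one expresses the gauge transformation $s \mapsto s + \lambda$ in this decomposition: a suitable $\sigma$-equivariant companion of the section $\lambda$ has $\rr$-component equal to $1$ (fixed by the normalization), while its image in the fiber at $\kappa$ equals $-\lambda(\kappa)$ once one accounts for the antilinear identification of fibers inherent in the real structure. The main obstacle lies precisely in this last step: pinning down the identifications and sign conventions so that the gauge generator takes exactly the form $(1, -\lambda(\kappa))$, and confirming that the resulting splitting works uniformly for all $\kappa \in \pp^1$. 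The underlying statements of parts (1) and (2) are elementary once the real structure is fixed.
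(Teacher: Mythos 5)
The paper states this lemma with no proof at all (it is followed immediately by the end-of-proof box), so there is nothing to compare against; the lemma is meant to be an elementary computation. Your parts (1) and (2) are correct and essentially complete. In (1), writing $s(\lambda)=c_0+c_1\lambda+c_2\lambda^2$ and computing $\sigma^*s$ via the transition $a(\lambda)=\lambda^2b(\mu)$, $\mu=-\lambda^{-1}$, gives $(\sigma^*s)(\lambda)=\pm(\overline{c_2}-\overline{c_1}\lambda+\overline{c_0}\lambda^2)$, so the fixed locus is cut out by $c_2=\pm\overline{c_0}$ (two real conditions) and a reality condition on $c_1$ (one real condition), leaving $\rr^3$ as you say. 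In (2), the divisibility argument using that $\sigma$ is fixed-point free is clean and uniform in $\kappa$; the only point to add is that the induced real structure on $\Oo_{\pp^1}(2)(-\kappa-\sigma(\kappa))\cong\Oo_{\pp^1}$ could a priori be the quaternionic one with zero fixed part, but the inequality $\dim_\rr\ker\ge 3-\dim_\rr\cc_\kappa=1$ forces it to be the standard one, so the kernel is exactly $\rr$ and surjectivity follows.

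Part (3) is where your argument is genuinely incomplete, and you say so yourself: you never pin down the splitting, the normalization of the generator of the kernel, or the sign, and these are precisely the content of the claim. The intended "natural splitting" is the one the paper has already written down in \eqref{frakp} and \eqref{frake}: identify $\Gamma(\pp^1,\Oo_{\pp^1}(2))^\sigma\cong\rr\times\cc$ by the coordinates $(a,\alpha)$ at $\lambda=0$, and send $(a,\alpha)$ to the pair
$\bigl({\mathfrak p}(\kappa)=a+2{\rm Re}(\kappa\overline{\alpha}),\ {\mathfrak e}(\kappa)=\alpha-a\kappa-\overline{\alpha}\kappa^2\bigr)$,
where the second component is exactly the restriction to the fiber $\cc_\kappa$ and the first (the parabolic weight) is the complementary real coordinate. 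Injectivity is a one-line check: ${\mathfrak e}(\kappa)=0$ and ${\mathfrak p}(\kappa)=0$ give $a=-\kappa\overline{\alpha}-\overline{\kappa}\alpha$ and then $\alpha=-|\kappa|^2\alpha$, so $\alpha=0$, $a=0$; bijectivity follows by real dimension count, and the chart at $\infty$ handles $\kappa=\infty$. The gauge generator is $(a,\alpha)=(1,0)$, which visibly maps to $(1,-\kappa)=(1,-\lambda(\kappa))$. Note also that your plan to read off the fiber component directly from "$s\mapsto s+\lambda$" cannot work uniformly: at $\kappa=0$ the generator lies in the kernel of restriction, so its first component being $1$ is a nontrivial normalization statement about the $\rr$-factor, not just about the complement; the explicit formulas above resolve this.
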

\eop

The extra real parameter, kernel of the restriction map,
turns out to be the {\em parabolic weight parameter}. 
If $(E,\partial, \overline{\partial}, \varphi , \varphi ^{\dagger},h )$ is a tame harmonic bundle 
of rank $1$ over $X$
then it yields a $\sigma$-invariant section, and for $\lambda \in \aaaa^1$ the corresponding
point in $\rr \times \cc$ is $(p,e)$ where $p$ is the parabolic weight and $e$ the eigenvalue of the
residue of the $\lambda$-connection.
The parabolic weight expresses the growth rate of the harmonic metric $h$ near a puncture. 
The fact that we have this extra real parameter may be seen as a manifestation of the
fact that the monodromy transformations around punctures lie in a space whose Hodge weight is $2$.

Recall that Mochizuki defines the notion of {\em KMS-spectrum} of a harmonic bundle on $X$ at a puncture $y\in D$. 
This is the set of residual data consisting of a parabolic weight and an eigenvalue of the residue, for the asymptotic
structure of the harmonic bundle near $y$. Each element of the KMS spectrum is a vector in the space $\rr^3$ that
we have seen above; such a point is interpreted as a pair $({\mathfrak p}, {\mathfrak e})$ consisting of a parabolic weight and an eigenvalue,
in a way that depends on $\lambda$.

Sabbah \cite{Sabbah} and Mochizuki \cite{Mochizuki} gave formulas for the
variation of parabolic weight ${\mathfrak p}$ and eigenvalue ${\mathfrak e}$ as a function
of $\lambda$, generalizing my formulas \cite{hbnc} for $\lambda = 1$.

We'll use the notations of \cite{Mochizuki} (Sabbah's notations are slightly different but equivalent).
Starting with $(a,\alpha ) \in \rr \times \cc $ at $\lambda = 0$, the parabolic weight of the
parabolic structure at $\lambda$, giving the growth rate of holomorphic sections for the
holomorphic structure $\partial + \lambda \varphi ^{\dagger}$, is
\begin{equation}
\label{frakp}
{\mathfrak p}(\lambda, (a,\alpha )) = a + 2{\rm Re}(\lambda \overline{\alpha}).
\end{equation}
The eigenvalue of the residue of the logarithmic $\lambda$-connection $\nabla _{\lambda} = 
\lambda \partial + \varphi$ is
\begin{equation}
\label{frake}
{\mathfrak e}(\lambda, (a,\alpha )) = \alpha - a\lambda  -\overline{\alpha} \lambda ^2.
\end{equation}
These formulas may be derived \cite{weighttwo} as a consequence of the weight $2$ twistor space interpretation,
with
the expression of $\rr \times \cc$, depending on $\lambda$, as corresponding to a unique $\rr^3$ independent of 
$\lambda$.

\subsection{The rank $2$ case}
\label{secresults}

We would like to extend this picture to higher rank local systems on an open curve. 
Let's look at some potential difficulties in light of the previous discussion. 
Many of these issues have been raised by Nitsure and other authors \cite{HaiEtAl,MisraSingh,Mochizuki,NitsureRS,NitsureSabbah}.

Although $\srG$ was a group acting in the above example, it is necessary in general to consider
the action of a groupoid, that we'll call the {\em Hecke-gauge groupoid}. 

A first observation is that the
action of this groupoid becomes singular over $\lambda = 0$, in the above example the entire $\srG = \zz$ stabilizes
the full fiber over $\lambda = 0$. 
For this reason, we'll tend not really to look at the quotient space, but to retain just the action groupoid instead. 
One possible solution here would be to invoke the notion of {\em diffeological space}.

A next observation is that we have sidestepped any discussion of stability. From the compact case recall that the construction
of $M_{\rm Dol}$ requires the notion of stability of a Higgs bundle, so this information is needed in the fiber over $\lambda = 0$
for the construction of $M_{\rm Hod}$. 
However, in the quasiprojective case, defining stability (needed \cite{InabaIwasakiSaito} for all values of $\lambda$) 
requires knowing the parabolic weights, but we are trying to recover the parabolic
weights from the twistor space construction itself as happened in rank $1$. 
Without a notion of stability, we are going to be getting moduli spaces that are not of finite type but only locally of finite type, 
even over $\lambda \neq 0$ \cite{Herrero}. 
This should be accepted.

A third difficulty for making the construction is that, from the formulas
\eqref{frakp}, \eqref{frake} for the variation of eigenvalue of the residue as a function of
$\lambda$, a preferred section is always going to have some points where the eigenvalues are resonant, so we are not
able just to impose a non-resonance condition on the residues. 
We do however impose some conditions on the fiber over $0$ so as to improve somewhat the moduli problem. This makes it so that
the discussion will work for most but not all ``preferred sections''. 

To get a mixed twistor property for the relative tangent space along a preferred section, we apply the theory of Sabbah \cite{Sabbah}
and Mochizuki \cite{Mochizuki}. Although we use only a small and early fraction of their theory, our application highlights some of the 
subtleties involved and might therefore serve as a gentle introduction. 

\medskip

Here now is a summary of what we do, stated in Theorems \ref{mainmod}, \ref{mainRH} and \ref{maintwistor}. Those will then be
proven in the subsequent sections.

Let 
$Y$ be a compact Riemann surface and $D\subset Y$ a reduced divisor. Set $X:= Y-D$, and choose a base-point $x\in X$. 
We look at bundles of rank $2$.

A framed quasi-parabolic bundle with logarithmic $\lambda$-connection is 
$$
(\lambda , E, \nabla , F,\beta )
$$
where $\lambda \in \aaaa^1$, $E$ is a rank $2$ vector bundle on $Y$, $\nabla$ is a logarithmic $\lambda$-connection i.e.
$$
\nabla : E \rightarrow E\otimes \Omega ^1_Y(\log D), \;\;\; \nabla (ae) = a\nabla (e) + \lambda d(a)e ,
$$
$F=F_y \subset E_y$ is a one-dimensional subspace preserved by ${\rm res}_y(\nabla )$, and 
$\beta : E_x\cong \cc^2$ is a framing over the base-point.

\begin{hypothesis}
\label{hypstar}
Let $(\lambda , E, \nabla , F,\beta )$ be a framed quasi-parabolic logarithmic $\lambda$-connection. 
We make the following hypotheses. 
\begin{enumerate}
\item 
The only quasi-parabolic endomorphisms of $( E, \nabla , F)$ are scalars. 

\item 
If $\lambda = 0$, the spectral curve of $\varphi = \nabla$ (which is a Higgs field in this case) is irreducible of degree $2$
over the base.   

\item 
When speaking of a harmonic bundle, we assume furthermore that the two KMS spectrum elements 
(parabolic weight, residue eigenvalue) modulo $\zz$ are distinct at each $y\in D$. This may be measured at $\lambda = 0$
in $(\rr / \zz ) \times \cc$. . 

\end{enumerate}
\end{hypothesis}

Note that (2)$\Rightarrow$(1) at $\lambda = 0$ since Higgs bundle corresponds to a rank $1$ torsion-free sheaf over the
spectral curve and its only endomorphisms are scalars. We also note that (1) implies directly that the framed object is
rigid, i.e.\  there are no nontrivial endomorphisms respecting the framing. We'll see in Lemma \ref{obslem} that (1) implies 
the moduli problem is unobstructed hence smooth.

\begin{theorem}
\label{mainmod}
There exists a separated algebraic space, smooth and locally of finite type, parametrizing the framed quasi-parabolic logarithmic 
$\lambda$-connections $(\lambda , E, \nabla , F,\beta )$
satisfying Hypothesis \ref{hypstar}
$$
\widetilde{M}_{\rm Hod}(X) \stackrel{\lambda}{\longrightarrow} \aaaa ^1.
$$
\end{theorem}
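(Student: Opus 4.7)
The plan is to realize $\widetilde{M}_{\rm Hod}(X)$ as an orbit space of a standard Quot-style parameter scheme, using Hypothesis \ref{hypstar}(1) combined with the framing $\beta$ to rigidify the moduli problem. I would break the problem up according to the discrete invariant $d := \deg E \in \zz$ and, within each degree, according to an integer $N$ relative to which $E(N)$ is globally generated with vanishing $H^1$; the full moduli space is then obtained as an increasing union of open pieces, disjoint across $d$.

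Fix $d$ and $N$, and set $h := \chi(E(N)) = d + 2(N+1-g(Y))$. Let $Q$ be the Quot scheme parametrizing surjections $\Oo_Y(-N)^{\oplus h} \twoheadrightarrow E$ with $E$ a rank-$2$ bundle of degree $d$; this is quasi-projective of finite type. Over $Q \times \aaaa^1_\lambda$, the space of relative logarithmic $\lambda$-connections on the universal $E$ is an affine bundle, the quasi-parabolic flags $F_y \subset E_y$ at each $y \in D$ are sections of relative $\pp^1$-bundles, and the framings $\beta$ form a principal $\mathrm{GL}_2$-bundle. Combining these produces a finite-type scheme $Z_{d,N} \to \aaaa^1$. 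Inside $Z_{d,N}$ I cut out the open subscheme $Z_{d,N}^\circ$ where $E(N)$ is actually globally generated with vanishing $H^1$ and where Hypothesis \ref{hypstar}(1) and (2) hold fiberwise; both are open conditions, by semicontinuity of $\dim\mathrm{End}$ and by openness of irreducibility in families of spectral curves.

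Now let $\mathrm{GL}_h$ act on $Z_{d,N}^\circ$ by change of the Quot presentation. The key observation is that this action has trivial stabilizers: an automorphism of $(E,\nabla,F,\beta)$ is a scalar by (1), and that scalar is forced to be the identity by the framing. Trivial stabilizers together with the reductivity of $\mathrm{GL}_h$ allow the quotient to be constructed as a separated algebraic space $\widetilde{M}_{d,N} := Z_{d,N}^\circ / \mathrm{GL}_h$, e.g. via Artin's representability criterion or the Keel-Mori theorem. These spaces glue along the natural open inclusions as $N$ grows and fit into a disjoint union over $d$, yielding $\widetilde{M}_{\rm Hod}(X) \to \aaaa^1$. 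Smoothness is the content of the forthcoming Lemma \ref{obslem}: the obstruction to deforming a framed quasi-parabolic $\lambda$-connection lies in a hypercohomology group on $Y$ that pairs by Serre duality with the framed endomorphisms, and (1) together with the framing forces the latter to vanish. Separatedness is the valuative criterion combined with this rigidity: two DVR-extensions of a generic framed object must agree because they differ by a framed automorphism, necessarily trivial. The space is only locally of finite type since no stability bound caps $(d,N)$, as anticipated in the introduction.

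The main obstacle is ensuring that the triviality of stabilizers survives at $\lambda = 0$, where Higgs bundles can in principle have larger endomorphism algebras than $\lambda$-connections for $\lambda \neq 0$. Here Hypothesis \ref{hypstar}(2) is crucial: irreducibility of the degree-$2$ spectral curve $\Sigma \to Y$ forces the corresponding Higgs bundle to be a rank-one torsion-free sheaf on the integral curve $\Sigma$, whose endomorphism ring is a finite-dimensional integral $\cc$-algebra and hence just $\cc$. With this in place, condition (1) genuinely reduces $\mathrm{End}(E,\nabla,F)$ to scalars at every $\lambda$, the framing completes the rigidification, and the construction goes through uniformly across $\aaaa^1$.
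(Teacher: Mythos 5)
Your construction of the space itself follows essentially the same route as the paper: a Quot/parameter scheme carrying the universal data, an open locus cut out by Hypothesis \ref{hypstar}, a $GL$-action with trivial stabilizers thanks to rigidity, and a quotient algebraic space (the paper invokes Luna's \'etale slice theorem where you invoke Keel--Mori/Artin; the paper stratifies by the maximal degree of a subbundle rather than by $(d,N)$, but these are cosmetic differences). The smoothness and local-finite-type statements are also handled as in the paper.

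The genuine gap is in your separatedness argument. You write that two DVR-extensions ``must agree because they differ by a framed automorphism, necessarily trivial,'' but rigidity only gives \emph{uniqueness} of an isomorphism between the two special fibers --- it says nothing about \emph{existence}, which is the entire content of the valuative criterion here. Given two families over a pointed curve $(S,0)$ isomorphic over $S_\eta$, one must actually produce an isomorphism over $0\in S$, and a priori the generic isomorphism can degenerate to a non-invertible map. This is exactly where the paper does real work, in two cases. For $\lambda(0)\neq 0$ it uses separatedness of the framed representation space of $\pi_1(X)$ to extend the isomorphism over $X\times S$, then Hartogs to extend across $D\times S$. For $\lambda(0)=0$ it rescales the generic isomorphism by $t^a$ so that the limit is a nonzero map of Higgs bundles, and then uses Hypothesis \ref{hypstar}(2) --- irreducibility of the spectral curves --- to conclude that this nonzero map is forced to be an isomorphism; the framing then pins down $a=0$. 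Notice that you deploy Hypothesis (2) only to control endomorphisms at $\lambda=0$, whereas its essential role in the theorem is precisely to salvage separatedness over $\lambda=0$; without it the moduli space genuinely fails to be separated there, framing or no framing. As a minor secondary point, the space of logarithmic $\lambda$-connections on the universal bundle is not an affine bundle over the parameter scheme (existence of a $\lambda$-connection is obstructed by $\lambda$ times the Atiyah class, so fiber dimensions jump); it is only an affine-linear scheme over the base, which suffices but should be stated correctly.
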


We would then like to define an equivalence relation identifying different logarithmic connections that correspond to the
same Betti local system data.

Let $\widetilde{M}_B(X)$ denote the 
moduli space 
of tuples $(L,F,\beta )$ where $L$ is a rank $2$ local system on $X$, $F=\{ F_y\}$ is 
a sub-local system of the restriction of $L$ to a punctured disk around $y$, and $\beta : L_x\cong \cc^2$ is
a framing. 
Impose the analogue of Hypothesis \ref{hypstar} (1), namely that the only endomorphisms
preserving the subspaces are scalars. 
Let $\widetilde{M}_B(X)$ be the moduli space for such framed quasi-parabolic local systems. 

Forgetting the framing at $x$ would provide a map to an open subset defined by \ref{hypstar} (1) of the
${\mathscr X}$ moduli space of Fock-Goncharov \cite{FockGoncharov}.

Define the {\em Betti gauge groupoid} $\srG _B$ acting on $\widetilde{M}_B(X)$ to be the groupoid
consisting of 
partially defined morphisms from $\widetilde{M}_B(X)$ to itself, generated by the operation:

\noindent
$(P)$: defined on the open set where the eigenvalues of the local monodromy transformation 
are distinct, sending $(L,F,\beta )$ to $(L,F^{\perp},\beta )$ where $F^{\perp}_y$ is the eigenspace 
of the local monodromy that is different from $F_y$. 

This defines an etale groupoid, with quotient 
$M_B(X)=\widetilde{M}_B(X) / \srG _B$ a non-separated algebraic stack. 
This quotient stack behaves as in Kollár's observation \cite{Kollar}, since the operation is partially defined and étale.

\begin{theorem}
\label{mainRH}
There is an étale groupoid 
$$
\srG_{\rm dR} \rightarrow \widetilde{M}_{\rm dR}(X) \times \widetilde{M}_{\rm dR}(X)
$$
such that the Riemann-Hilbert correspondence gives an equivalence of analytic groupoids
$$
\left(  \widetilde{M}_{\rm dR}(X), \, \srG_{\rm dR} \right) 
\;
\cong 
\; 
(\widetilde{M}_B(X)  ,\, \srG _B) .
$$
This extends in a natural way to an étale groupoid on the Hodge moduli space
$$
\srG_{\rm Hod} \rightarrow \widetilde{M}_{\rm Hod}(X) \times \widetilde{M}_{\rm Hod}(X) .
$$
\end{theorem}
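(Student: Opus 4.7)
My plan is to build $\srG_{\rm Hod}$ as the étale groupoid generated by Hecke-type elementary modifications at each puncture, restrict to $\Gm$ (or $\lambda = 1$) to obtain $\srG_{\rm dR}$, then verify directly that Riemann--Hilbert intertwines these generators with the generating operation $(P)$ of $\srG_B$. Fix $y \in D$ and let $U_y \subset \widetilde{M}_{\rm Hod}(X)$ be the open subspace on which ${\rm res}_y(\nabla)$ has two distinct eigenvalues; at $\lambda = 0$ this holds automatically by Hypothesis \ref{hypstar}(2), since the spectral curve is then irreducible of degree two, so its fiber over $y$ consists of two distinct points. On $U_y$, given $(\lambda, E, \nabla, F, \beta)$, let $F^{\perp}_y \subset E_y$ be the other residue-eigenline and form the elementary modification
$$
E' := \ker\bigl(E \twoheadrightarrow E_y/F^{\perp}_y\bigr).
$$
Then $\nabla$ descends to a logarithmic $\lambda$-connection $\nabla'$ on $E'$ because it preserves both residue eigenlines at first order, the framing transports through $E'|_X \cong E|_X$, and the new flag $F'_y \subset E'_y$ is declared to be the image of the unchanged eigenline. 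A short local calculation in an adapted basis $(v_1, v_2)$ with $F = \langle v_1 \rangle$ and residue eigenvalues $(\alpha_1, \alpha_2)$ shows that ${\rm res}_y(\nabla')$ has eigenvalues $(\alpha_1 + \lambda, \alpha_2)$, with $F'_y$ lying on the $\alpha_2$-eigenline.

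This defines a morphism $P_y \colon U_y \to \widetilde{M}_{\rm Hod}(X)$. The opposite Hecke modification (now taken with respect to the new flag) provides a one-sided inverse on an open subset, so $P_y$ is étale. I then let $\srG_{\rm Hod}$ be the smallest étale subgroupoid of $\widetilde{M}_{\rm Hod}(X) \times \widetilde{M}_{\rm Hod}(X)$ containing the graphs of $\{P_y\}_{y \in D}$; its restriction to fibers over $\Gm$ gives $\srG_{\rm dR}$.

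For the Riemann--Hilbert equivalence, classical RH on the open locus of Hypothesis \ref{hypstar}(1) produces a holomorphic map $\widetilde{M}_{\rm dR}(X) \to \widetilde{M}_B(X)$ sending each framed parabolic logarithmic connection to its monodromy representation, with sub-local-system matched to $F$ via the eigenline correspondence. I would then verify the intertwining generator-by-generator. The modification $P_y^{\rm dR}$ does not alter $(E, \nabla)|_X$, so the underlying local system is preserved; the eigenvalues of local monodromy are $\exp(2\pi i (\alpha_1 + \lambda)/\lambda) = \exp(2\pi i \alpha_1/\lambda)$ and $\exp(2\pi i \alpha_2/\lambda)$, identical to the original pair, while the new flag $F'_y$ selects the $\exp(2\pi i \alpha_2/\lambda)$-eigenspace in place of the $\exp(2\pi i \alpha_1/\lambda)$-eigenspace. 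This is precisely the generating operation $(P)$ of $\srG_B$. Meanwhile $(P_y^{\rm dR})^2$ produces a ``double Hecke'' that tensors $E$ by $\Oo(-y)$ and shifts both eigenvalues by $\lambda$---the rank-$2$ analogue of the integer-shift gauge transformation of the rank-$1$ discussion in \S\ref{secresults}---and lies over the identity of $\widetilde{M}_B$. Since the generators of $\srG_{\rm dR}$ cover the generators of $\srG_B$ with fibers matching the $\zz^D$-ambiguity in residue eigenvalues, the claimed equivalence of étale analytic groupoids follows.

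The step I expect to be hardest is the uniformity of the Hecke construction across $\lambda = 0$. Over the Higgs-bundle fiber one must check that $(E', \varphi')$ still satisfies Hypothesis \ref{hypstar}: its spectral curve coincides with the original one away from $y$ and only has its two points over $y$ interchanged, so it remains irreducible of degree two, and condition (1) descends from this. A secondary subtlety is that $U_y$ is $\lambda$-dependent---via \eqref{frake} ``distinct residue eigenvalues'' translates to a $\lambda$-varying condition on the underlying KMS spectrum---so one must verify that the generated groupoid yields the correct identifications in every fiber, which relies on the smoothness of the moduli problem from Lemma \ref{obslem} and on the weight-$2$ picture developed in \S\ref{secresults} describing how the real three-parameter families of KMS data degenerate.
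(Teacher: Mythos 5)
There is a genuine gap: your groupoid is too small, and Riemann--Hilbert would not be an equivalence onto $(\widetilde{M}_B(X),\srG_B)$. You generate $\srG_{\rm Hod}$ from a single operation per puncture, a combined ``Hecke plus eigenline swap,'' which at the level of residue eigenvalues acts by $(a,b)\mapsto (b,a+\lambda)$ and is defined only where the two eigenvalues are distinct. Its iterates produce only the diagonal shifts $(a+k\lambda,\,b+k\lambda)$ and their swaps, i.e.\ a single copy of $\zz$ per puncture. But the fiber of Riemann--Hilbert over a fixed framed quasi-parabolic local system consists of all Deligne-type lattices, whose residues realize the full $\zz^2$ of independent integer shifts $(a+m\lambda,\,b+n\lambda)$ at each puncture. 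For instance, the extensions with residues $(a,b)$ and $(a+\lambda,b)$ give the same Betti object but are not connected by any composite of your generator, so the map of groupoids fails to be full. The paper avoids this by taking \emph{two} kinds of generators: the everywhere-defined elementary transformation $H_y$ (with its inverse $H_y^{-1}=T_yH_y$) and the partially defined pure swap $P_y$ that changes only the flag; elements then have a normal form $P_y^{\epsilon}(H_yP_y)^kH_y^m$ realizing the whole lattice of identifications.

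A second, compounding problem is the locus of coincident residue eigenvalues. Your generator has empty domain there, so such points carry only identity arrows, yet the lattice ambiguity of the Riemann--Hilbert correspondence persists (e.g.\ a nilpotent residue and its Hecke transform both have unipotent monodromy and must be identified); the paper's $H_y$ is defined on all of $\widetilde{M}_{\rm Hod}$ precisely to handle this. Your claim that Hypothesis \ref{hypstar}(2) forces distinct eigenvalues over all of the $\lambda=0$ fiber is also false: an irreducible degree-two spectral curve may be ramified over a point of $D$, in which case the two residue eigenvalues of the Higgs field coincide there. Finally, your verification of the equivalence (``fibers matching the $\zz^D$-ambiguity'') underestimates the ambiguity as $\zz^{|D|}$ rather than $\zz^{2|D|}$; the paper instead reduces, via the groupoid action and twisting by rank-one connections, to small neighborhoods where the residues lie in regions on which the logarithm is single-valued and then invokes Deligne's local isomorphism.
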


The isomorphism of topological spaces between
$X^{\rm top}$ and $\overline{X}^{\rm top}$ gives an equivalence
$$
(\widetilde{M}_B(X)  ,\, \srG _B) \; \cong \; (\widetilde{M}_B(\overline{X})  ,\, \srG _B).
$$
Using the Riemann-Hilbert correspondence we can then make a Deligne glueing.  In terms
of groupoids this can be viewed as follows: 
let 
$$
\widetilde{M}_{\rm DH} := \widetilde{M}_{\rm Hod}(X) \sqcup \widetilde{M}_{\rm Hod}(\overline{X})
$$
with Hecke-gauge groupoid $\srG _{\rm DH}$ combining the $\srG_{\rm Hod}$ on both pieces, together with
pieces identifying points that correspond to elements of the Betti moduli space that are identified
under the previous equivalence:
$$
M_{\rm DH} = \left( \widetilde{M}_{\rm DH}, \, \srG _{\rm DH} \right)  .
$$
The quotient of this groupoid would be some kind of non-separated analytic stack but with stabilizer groups that aren't
very well behaved. Although we don't identify the precise framework for such a quotient, we note that the 
tangent bundle of the ``quotient'' $M_{\rm DH}(X)$ may be defined, since the groupoid is étale. This gives, in 
particular, the pullback of the tangent
bundle by a section. 

\begin{theorem}
\label{maintwistor}
Suppose $(E_X,\partial, \overline{\partial}, \varphi , \varphi ^{\dagger} ,h)$ is a tame harmonic bundle on $X$, 
satisfying our Hypothesis \ref{hypstar}, and choose a framing $\beta: E_x\cong \cc^2$
taking $h_x$ to the standard hermitian metric on $\cc^2$.  Then we get in a natural way a section of the Deligne-Hitchin groupoid
$$
\rho : \pp^1\rightarrow \left(  \widetilde{M}_{\rm DH}(X), \, \srG_{\rm DH} \right)  .
$$
Let $T_{\rho}$ be the pullback by $\rho$ of the tangent bundle of $M_{\rm DH}(X)$. It has a filtration 
$$
0\subset W_0T_{\rho} \subset W_1T_{\rho}\subset W_2T_{\rho} = T_{\rho}
$$
where $W_1T_{\rho}$ is the set of tangent vectors that preserve the eigenvalues of the residues, and $W_0T_{\rho}$ is the tangent space
of the change of framing. Then $(T_{\rho},W_{\cdot})$ is a {\em mixed twistor structure}, meaning that 
$W_k/W_{k-1}$ is a semistable bundle on $\pp^1$ of slope $k$ (for $k=0,1,2$). 
\end{theorem}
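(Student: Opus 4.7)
The plan is to construct $\rho$ explicitly from the harmonic bundle, identify the pullback tangent complex along $\rho$ fiberwise, and then compute its graded pieces by separating interior and puncture contributions.

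First I would define $\rho$ by the Deligne formula: over the $\lambda$-chart of $\pp^1$, take
$$
\Ee_\lambda = (E,\overline{\partial}+\lambda\varphi^{\dagger}),\qquad \nabla_\lambda = \lambda\partial + \varphi,
$$
with quasi-parabolic line $F_y \subset (\Ee_\lambda)_y$ read off from the KMS decomposition of $h$ near $y$ (well-defined thanks to Hypothesis \ref{hypstar}(3)) and framing $\beta$ at $x$; over the $\mu = -\lambda^{-1}$-chart I would use the analogous construction applied to the conjugate harmonic bundle on $\overline X$, and glue along $\Gm$ via the Riemann-Hilbert equivalence of Theorem \ref{mainRH}. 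I would then verify that the resulting map lands consistently in $\widetilde M_{\rm DH}$ modulo the Hecke-gauge groupoid, that Hypothesis \ref{hypstar} holds at every $\lambda \in \pp^1$ (generically automatic, using (2) at $\lambda=0$ and (3) to rule out bad points elsewhere), and that $\rho$ is $\sigma$-invariant.

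Next I would identify the fiber of $T_\rho$ at $\lambda$ as the first hypercohomology of a deformation complex of the form
$$
\mathrm{ParEnd}(E,F) \xrightarrow{\,[\nabla_\lambda,\,\cdot\,]\,} \mathrm{ParEnd}(E,F)\otimes \Omega^1_Y(\log D),
$$
enlarged by the framing direction at $x$ and by the residue-data directions along $D$. The filtration $W_\bullet$ corresponds to a filtration on this complex: $W_0$ is the change-of-framing piece; $W_1$ adds the classes whose residue perturbation at every $y$ preserves the eigenvalues of $\mathrm{res}_y(\nabla_\lambda)$ together with the quasi-parabolic line; $W_2/W_1$ is the quotient by these constraints, decomposing as a direct sum indexed by pairs (puncture, residue eigenvalue). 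For the graded pieces I would then argue as follows. The piece $W_0$ is manifestly a trivial bundle, since a change of framing at $x$ acts on the tuple without reference to $\lambda$, giving slope $0$. The piece $W_2/W_1$ splits by Hypothesis \ref{hypstar}(3) as a direct sum of summands, one per puncture per KMS eigenvalue, whose $\lambda$-dependence is exactly the rank-one picture: each summand is $\sigma$-equivariant with transition rule governed by formulas \eqref{frakp} and \eqref{frake}, and the computation recorded in the rank-one paragraph yields one copy of $\Oo_{\pp^1}(2)$ per summand, so $W_2/W_1$ is semistable of slope $2$. The piece $W_1/W_0$ is the nonabelian-cohomological analogue of the compact case: using the Sabbah-Mochizuki harmonic theory for tame harmonic bundles, classes in the strict-parabolic hypercohomology are represented by parabolic harmonic $1$-forms, and the transition from the $\lambda$-chart to the $\mu$-chart rescales these representatives by a single factor of $\lambda^{-1}$, yielding semistability of slope $1$.

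The main obstacle, I expect, will be the harmonic-theoretic identification of $W_1/W_0$ in the presence of logarithmic singularities whose eigenvalues may become resonant at isolated values of $\lambda$. One must check that at such $\lambda$ the space of harmonic parabolic forms still captures exactly the deformations preserving the eigenvalues, with the right asymptotic decay at $D$, so that $W_1$ is neither too small (missing a piece that slides into $W_2$) nor too large. This is precisely where the Sabbah-Mochizuki asymptotic analysis is delicate, and where Hypothesis \ref{hypstar}(3) together with the decision to allow $\widetilde M_{\rm Hod}$ to be only locally of finite type become essential.
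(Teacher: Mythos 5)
Your architecture matches the paper's: define $\rho$ by the standard family $(\overline{\partial}+\lambda\varphi^{\dagger},\lambda\partial+\varphi)$ with quasi-parabolic data from the KMS structure, filter the tangent complex by framing / fixed residues / all, get $Gr_0$ trivially, $Gr_2$ from the rank-one weight-two picture, and $Gr_1$ from Sabbah--Mochizuki purity. But the two load-bearing steps are left unresolved. First, the section itself: Hypothesis \ref{hypstar}(3) only says the KMS pairs $u,u'$ are distinct mod $\zz$ \emph{at} $\lambda=0$. As $\lambda$ varies, formulas \eqref{frakp} and \eqref{frake} allow ${\mathfrak p}(\lambda)\equiv{\mathfrak p}'(\lambda)\ (\mathrm{mod}\ \zz)$ at some $\lambda$ (where the parabolic filtration no longer singles out a line in $E_y$) and resonance ${\mathfrak e}(\lambda)={\mathfrak e}'(\lambda)-k\lambda$ at others (where the residue eigenspace decomposition fails). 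The paper's construction rests on the observation that, because $(a,\alpha)\mapsto({\mathfrak p},{\mathfrak e})$ is bijective for each fixed $\lambda$, these two degenerations never occur simultaneously; one then covers $\pp^1$ by neighborhoods on which either the weight filtration (case a) or a chosen residue eigenspace (case b) defines $F_y$, with the ambiguity absorbed by $\srG_{\rm Hod}$. Saying the line is ``well-defined thanks to (3)'' skips exactly this. Likewise, checking Hypothesis (1) at $\lambda\neq 0$ is not automatic: the paper deduces it from stability of the canonical parabolic object (indecomposability coming from irreducibility of the spectral curve), after adjusting parabolic weights in case (b).

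Second, for $W_1/W_0$ the claim that transition functions ``rescale by a single factor of $\lambda^{-1}$'' is the compact-case heuristic; what is actually needed, and what you correctly flag as the obstacle without resolving it, is a local quasi-isomorphism near each $y\in D$ between the quasi-parabolic deformation complex $[\,End^0_Q(E)\to End^0_{SQ}(E)\otimes\Omega^1_Y(\log D)\,]$ and the locally $L^2$ complex computing the Sabbah--Mochizuki pure twistor ${\mathbf H}^1$. The paper proves this by a case split on whether the parabolic weights of $End^0(E)$ are integral at $\lambda_0$ (quasi-isomorphism via a common quotient $G_{(e-e'')}$ in the integral case, literal equality of sheaves in the non-integral case), and separately verifies that the Betti gluing of the tangent bundle agrees with the sesquilinear-pairing gluing of the twistor structure by reducing to compactly supported representatives at generic $\lambda$ on the unit circle. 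Neither of these appears in your outline. A minor further point: $W_2/W_1$ is not the free direct sum over punctures and eigenvalues; it sits in an exact sequence $0\to Gr_2^W\to\bigoplus_{y}\Rr_{y,DH}\to H^2_{DH}(X)\to 0$ imposed by the residue theorem on the determinant, though this does not change the weight-two conclusion.
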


The weight $1$ property of the graded piece corresponding to deformations fixing the eigenvalues of the residues,
corresponds to the fact---well-known in the physics literature---that moduli spaces of flat bundles with fixed conjugacy classes 
have a Hitchin-type hyperkähler structure,
see for example \cite{Holdt}. 

We'll base our proof on the pure twistor ${\mathcal D}$-module theory of \cite{Sabbah,Mochizuki}. The more general and full theory
of mixed twistor ${\mathcal D}$-modules
\cite{MochizukiMTM} should allow for a more direct proof. Mochizuki has communicated the
suggestion to consider the mixed twistor ${\mathcal D}$-module
${\mathfrak T}[\ast D][!x]$ where ${\mathfrak T}$ is the extension associated to $End(E)$; one would still need to show a
compatibility with the moduli space construction.

\section{Logarithmic connections}

Throughout this paper, $Y$ is a smooth compact Riemann surface and $D = \{ y_1,\ldots , y_k\}$ is a nonempty reduced divisor. 
We set $X:= Y-D$ and fix a basepoint $x\in X$. 

A quasi-parabolic logarithic $\lambda$-connection of rank $2$ consists of $\lambda \in \cc$, a vector bundle $E$ of rank $2$ over $Y$
together with a logarithmic $\lambda$-connection operator 
$$
\nabla : E\rightarrow E\otimes _{\Oo _Y} \Omega ^1_Y(\log D)\;\;\;\; \nabla (ae) = a\nabla (e) + \lambda (da) e
$$
and for each $y\in D$ a subspace $F_y\subset E_y$ of rank $1$, preserved by the residue ${\rm res}_y(\nabla )$. 

A {\em framing} at the basepoint $x\in X$ is an isomorphism $\beta : E_x \cong \cc^2$. 

These all have versions relative to a base scheme $S$ where $\lambda :S\rightarrow \aaaa^1$ and  
$E$ becomes a bundle on $Y\times S$. 

We usually consider Hypothesis \ref{hypstar} on $(\lambda , E, \nabla , F, \beta )$, implying in particular that the framed object is rigid.

Let $\srMtilde _{\rm Hod} (Y,\log D,x)$ denote the moduli functor of quasi-parabolic logarithmic $\lambda$-connec\-tions
of rank $2$ on $(Y,D)$, framed at $x$ and satisfying Hypothesis \ref{hypstar}. 
This functor associates to a scheme $S$ the set of relative data $(\lambda , E,\nabla , F, \beta )$
on $Y\times S$, up to isomorphism. It maps via $\lambda$ to $\aaaa^1$. 

Construction of a moduli space is by now a classical subject. Some references, including a few further directions, are \cite{BalajiSeshadri,Bhosle,BhosleRamanathan,BiswasInabaKomyoSaito,HaiEtAl,Herrero,InabaIwasakiSaito,
Jeffrey,Konno,MaruyamaYokogawa,NitsureCoh,NitsureLog,Singh}  
but it would be impossible to mention all of the relevant articles here. 

\begin{proposition}
This moduli functor $\srMtilde _{\rm Hod} (Y,\log D,x)$ is represented by a separated algebraic space 
$\Mtilde _{\rm Hod} (Y,\log D,x)$
that is  locally of finite type over $\aaaa^1$.
\end{proposition}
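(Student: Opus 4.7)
The plan is classical: realize $\widetilde{M}_{\rm Hod}$ as the quotient of a quasi-projective parameter scheme by a free and proper $GL_N$-action, exploiting the rigidifying combination of the framing at $x$ and Hypothesis \ref{hypstar}(1). First, stratify by $d = \deg E \in \zz$: for each $d$ the family of rank-$2$ bundles of degree $d$ on $Y$ is bounded, so for $n$ sufficiently large one has $H^1(Y,E(n))=0$ with $h^0(E(n))=N$ a fixed constant, and a choice of basis of $H^0(E(n))$ realizes $E$ as a point of a locally closed quasi-projective subscheme $Q_d$ of the Grothendieck Quot scheme of rank-$2$ quotients of $\Oo_Y(-n)^{\oplus N}$ with correct $H^0$.

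Next, working relative to $Q_d \times \aaaa^1$, one adds in succession the logarithmic $\lambda$-connection (representable as a relative affine scheme, being a torsor under $\underline{\mathrm{Hom}}(E,E\otimes\Omega^1_Y(\log D))$), the residue-invariant parabolic line $F_y$ at each $y\in D$ (cut out in $\prod_{y\in D}\pp(E_y)$ by a rank condition on $\mathrm{res}_y(\nabla)$), and the framing $\beta: E_x \cong \cc^2$ (a $GL_2$-torsor at the basepoint fiber). This produces a quasi-projective parameter scheme $R_d$ over $\aaaa^1$ carrying a $GL_N$-action by change of basis of $H^0(E(n))$. Hypothesis \ref{hypstar}(1) is an open condition; on the corresponding open locus $R_d^\circ$, any element of the stabilizer in $GL_N$ of a point induces a quasi-parabolic endomorphism of $(E,\nabla,F)$ compatible with $\beta$, which by (1) must be a scalar and by the framing must be the identity. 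Hence the action is free; properness then follows from the usual graph-of-action argument using that the framing is a closed condition. The geometric quotient $R_d^\circ/GL_N$ therefore exists as a separated algebraic space of finite type over $\aaaa^1$, and the disjoint union over $d\in\zz$ yields the required moduli space, locally of finite type.

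The main obstacle is the separatedness step, which I would verify through the valuative criterion. Given two $R$-families over a DVR $R$ agreeing over $\mathrm{Spec}(K)$, framed rigidity makes the generic isomorphism $\phi$ unique, so the question is whether $\phi$ extends across the special fiber. The potential obstruction is an elementary transformation (Hecke modification) along a puncture that alters $E$ without changing the generic data; any such modification would produce a non-scalar endomorphism of the central fiber compatible with both the parabolic structure and the $\lambda$-connection, contradicting Hypothesis \ref{hypstar}(1). A Hartogs-type extension on the regular surface $Y\times\mathrm{Spec}(R)$ then promotes $\phi$ to a global isomorphism, giving separatedness. The smoothness asserted in Theorem \ref{mainmod}, by contrast, is deferred to the deformation-theoretic content of Lemma \ref{obslem}.
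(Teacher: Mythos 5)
There are two genuine gaps. First, the boundedness step fails as stated: stratifying by $d=\deg E$ alone does not produce bounded families, since rank-$2$ bundles of fixed degree on a curve are unbounded (consider $L\oplus L^{-1}(d)$ with $\deg L\to\infty$), so there is no single twist $n$ with $H^1(Y,E(n))=0$ and $h^0(E(n))$ constant on your stratum, hence no single Quot scheme $Q_d$. The paper instead stratifies by the \emph{maximum degree $k$ of a sub-line-bundle} of $E$ (the subsets $\srU_k$); each of these is bounded, and their union is only \emph{locally} of finite type --- which is precisely why the Proposition claims no more than that (this is the price of having no stability condition available). With that correction, your Quot-scheme/$GL(N)$-quotient construction and the freeness-from-rigidity argument match the paper's.

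Second, your separatedness argument does not work. An elementary transformation along the special fibre produces a nonzero, non-invertible morphism $t^a g|_{t=0}\colon E_0\to E_0'$ \emph{between the two candidate limits}, not an endomorphism of either one, so Hypothesis \ref{hypstar}(1) cannot be invoked directly. If you try to manufacture an endomorphism by composing with the extension of $g^{-1}$, the composite is $t^{a+b}\cdot\mathrm{id}$, which vanishes at $t=0$ whenever $a+b>0$; chasing kernels and images, what you actually extract is a proper $\nabla$-invariant subsheaf of $E_0$ (the resulting endomorphism is zero), and Hypothesis \ref{hypstar}(1) does not forbid invariant subsheaves, only non-scalar quasi-parabolic endomorphisms. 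This is why the paper splits into two cases: for $\lambda(0)\neq 0$ it passes through the Riemann--Hilbert correspondence to the framed representation variety of $\pi_1(X)$, which is affine hence separated, and then extends the isomorphism across the codimension-two locus $D\times\{0\}$ by Hartogs; for $\lambda(0)=0$ it uses Hypothesis \ref{hypstar}(2) --- irreducibility of the spectral curve --- to exclude proper $\varphi$-invariant subsheaves, forcing $t^ag|_{t=0}$ to be an isomorphism, after which compatibility with the framing pins down $a=0$. Your proposal never uses Hypothesis \ref{hypstar}(2), and without it separatedness genuinely fails at $\lambda=0$ (e.g.\ for nilpotent Higgs fields), so this step needs the paper's case analysis rather than the endomorphism argument you sketch.
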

\begin{proof}
We can cover the moduli functor by subsets $\srU _k$ consisting of points where the maximum degree of a subbundle of $E$ is $k$. 
These are bounded. The usual theory 
allows us to represent each of these as a quotient of a quasiprojective scheme by the 
action of  a group of the form $GL(N)$ for some large $N$. Since objects are rigid by Condition (1) of Hypothesis \ref{hypstar},
the stabilizer groups are trivial. Luna's etale slice theorem implies that the quotient is an
algebraic space. 
The union of these spaces of finite type is locally of finite type, although not of finite type \cite[Lemma 4.13]{Herrero}. 
 
To show separatedness, we use the condition (2) of Hypothesis \ref{hypstar}. Note that 
the moduli space is covered by finite type algebraic spaces and we may assume given two curves in one of those. That is to say,
we assume given a pointed smooth curve $(S,0)$ and denote by $S_{\eta}:= S-\{ 0\}$, 	and we are given two maps $S
\rightarrow \srU$ that agree on $S_{\eta}$. We obtain two bundles $E$ and $E'$
on $Y\times S$ (together with all their data) that are isomorphic over $Y\times S_{\eta}$, and suppose either $\lambda (0)\neq 0$ or
$\nabla _0$ and $\nabla '_0$ have irreducible spectral curves. We would like to show that the two maps agree on $S$,
in other words that the isomorphism extends to $Y\times S$. 

In the case $\lambda (0)\neq 0$, reason complex-analytically. 
The framed representation space of the fundamental group of $X=Y-D$ is separated, so 
the isomorphism extends to an isomorphism of flat holomorphic bundles on $X\times S$, hence to an isomorphism of
bundles on $Y\times S$ by Hartogs' theorem. 
Again by Hartogs' theorem the connection operator also extends, and the framing extends since $x\in X$. 
Our isomorphism preserves the subbundles $F_{\{ y\} \times S}$ away from the origin of $S$, it follows that these subbundles are also preserved at
$0\in S$. Although the construction was analytic, the resulting isomorphism of bundles is algebraic since $Y$ is proper. 

We need to treat the case $\lambda (0)=0$ assuming the spectral varieties of $\nabla (0)$ and $\nabla '(0)$ are irreducible. 
Let $t$ denote the coordinate on $S$. Let $g:E_{\eta} \cong E'_{\eta}$ be the isomorphism of bundles respecting
connections. There is a power of $t$ so that $t^ag$ extends to a morphism 
$$
t^ag : E \rightarrow E'
$$
that is nonzero for $t=0$. But this morphism respects the connections, i.e.\  the Higgs fields at $t=0$. Since the spectral
varieties are irreducible, this implies that the morphism is an isomorphism over $t=0$ too. Now the condition that $g$ respects the
framing impliess that $a=0$ so $g$ extends to an isomorphism as required. As before, it respects the sub-bundles $F_{\{ y\} \times S}$.
\end{proof}

This proposition is the construction of moduli spaces for Theorem \ref{mainmod}, see Corollary \ref{smoothcor} for smoothness.

We note that the rescaling of a $\lambda$-connection to $\lambda ^{-1}\nabla$ provides an isomorphism on the open set $\Gm \subset \aaaa^1$
where $\lambda \neq 0$
$$
\Mtilde_{\lambda \neq 0}(Y,\log D, x) \stackrel{\cong}{\longrightarrow} \Gm \times \Mtilde_{\lambda =1}(Y,\log D, x)  .
$$
This is equivariant for the $\Gm$-action by rescaling on the left and the trivial action on the right hand side. Here and below,
we write $\Mtilde_{\lambda \neq 0}$ etc. for the fibers of $\Mtilde_{\rm Hod}$ over various values of $\lambda$.

\subsection{Riemann-Hilbert morphism}

For each $y\in D$ choose a point $\eta _y\in X$ near $y$. Given a local system $L$ on $X$, the {\em nearby fiber} to $y\in D$ is the
fiber $L_{\eta _y}$, and it has a local monodromy operator induced by the loop based at $\eta _y$ going once around $y$. 

Let $\Mtilde_B(X,D,x)$ denote the moduli space of local systems $L$ on $X$ provided with a subspace $F_y$ of the nearby fiber at
each $y\in D$, invariant under the local monodromy, such that Hypothesis \ref{hypstar}
is satisfied, together with a framing $\beta : L_x\cong \cc ^2$.

Consider the open subset of points called {\em nonresonant}
$$
\Mtilde_{\lambda \neq 0}^{\rm nr}(Y,\log D,x) \subset \Mtilde_{\lambda \neq 0}(Y,\log D,x)
$$
defined to be the set of $(\lambda , E,\nabla , F, \beta )$ such that, if $a_y$ denotes the eigenvalue of ${\rm res}_y(\nabla )$ on $F_y$
and $b_y$ the eigenvalue on $E_y/F_y$, then 
$$
b_y - a_y \not \in \lambda \cdot  \zz _{<0} 
$$
for any $y\in D$. 

Let $U_y$ be a small disk around $y$. If $(\lambda , E,\nabla , F, \beta )$  satisfies the non-resonance condition, then letting
$L:= E^{\lambda ^{-1} \nabla }$ denote the local system of flat sections, there is a unique 
sub-connection of rank $1$
$$
F_{U_y} \subset E|_{U_y}
$$
whose fiber over $y$ is $F_y$. We denote by $F_{L,y}$ the fiber of $F_{U_y}$ at the nearby point $\eta _y$. 
Hypothesis \ref{hypstar} implies that $(L,F)$ has only scalar endomorphisms (the condition for inclusion in $\Mtilde _B$). 
This serves to define
the {\em Riemann-Hilbert morphism}.  Beyond Deligne \cite{Deligne} some references
include \cite{Boalch,BudurLererWang,InabaIwasakiSaito,NitsureRS,NitsureSabbah}, 
although it would again be impossible to give a complete list.

\begin{proposition}
The Riemann-Hilbert correspondence is a morphism of complex analytic spaces
$$
\Mtilde _{\lambda \neq 0}^{\rm nr}(Y,\log D,x) \stackrel{RH}{\longrightarrow} \Gm \times \Mtilde_B(X,D,x)  
$$
sending $(\lambda , E, \nabla , F, \beta )$ to $(\lambda, L,F_L,\beta )$ where $L:= E^{\lambda ^{-1} \nabla }$ and $\beta$ is the same framing on 
$L$ as on $E$, and $F_{L,y}$ is defined as in the previous paragraph. 
\end{proposition}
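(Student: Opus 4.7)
The plan is to construct the map pointwise by using the non-resonance condition to extract a rank one sub-connection near each puncture, and then to verify that the construction is compatible with base change and holomorphic in parameters. The underlying classical Riemann-Hilbert correspondence for non-resonant logarithmic connections on $(Y,D)$ is already known to be an analytic morphism between the appropriate unflagged moduli spaces (cf.\ the references cited above), so the novelty here is only in handling the flag data $F$ and in checking that Hypothesis \ref{hypstar}(1) transfers to the Betti side.

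First I would work out the construction pointwise. Fix $(\lambda, E, \nabla, F, \beta)$ with $\lambda \neq 0$ non-resonant. Near $y\in D$ pick a local coordinate $t$ vanishing at $y$ and a local frame $(e_1, e_2)$ of $E|_{U_y}$ in which $e_1(y)$ spans $F_y$ and ${\rm res}_y(\nabla)$ is upper triangular with diagonal entries $a_y, b_y$. Writing $\nabla = \lambda d + A(t)\frac{dt}{t}$, I would look for a rank one $\nabla$-stable sub-bundle generated by a section of the form $e_1 + f(t)e_2$ with $f(0) = 0$. Expanding $A$ as a power series and matching coefficients order by order yields a recursion for the Taylor coefficients $f_n$ in which the $n$-th step carries the denominator $b_y - a_y + n\lambda$. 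The assumption $b_y - a_y \not\in \lambda\zz_{<0}$ is exactly what ensures that none of these denominators vanishes for $n \geq 1$, so $f(t)$ exists and is uniquely determined. This produces the sub-connection $F_{U_y}\subset E|_{U_y}$ with prescribed fiber $F_y$ at $y$; taking horizontal sections gives a rank one local sub-system whose fiber at the nearby point $\eta_y$ is $F_{L,y}$.

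To see that the target data lies in $\Mtilde_B(X,D,x)$, I would check that Hypothesis \ref{hypstar}(1) transfers. An endomorphism of $(E, \nabla)$ corresponds under the non-resonant logarithmic Riemann-Hilbert equivalence to an endomorphism of $L$, and by the uniqueness of the extension $F_{U_y}$ any such endomorphism preserves $F_y$ if and only if it preserves $F_{U_y}$, which is the same as saying the corresponding endomorphism of $L$ preserves $F_{L,y}$. Hence scalar-only on the de Rham side is equivalent to scalar-only on the Betti side.

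Finally, to obtain analyticity of the map, I would repeat the construction over an analytic base $S$. The relative sheaf $L_S := \ker(\lambda_S^{-1}\nabla_S)$ on $X^{\rm an}\times S^{\rm an}$ is a holomorphic family of local systems, the non-resonance locus is open because the residue eigenvalues vary continuously, and the framing transports tautologically. Near each puncture, the Taylor recursion above has holomorphic coefficients in the parameters of $S$, and under strict non-resonance its unique solution depends holomorphically on $S$, giving a relative sub-connection $F_{U_y, S}$ whose horizontal sections restricted to $\{\eta_y\}\times S$ provide the relative flag $F_{L_S}$. The main technical obstacle I would expect is controlling this recursion uniformly on compact subsets: the denominators $b_y - a_y + n\lambda$ must be bounded away from zero so that the formal series $f(t)$ converges on a disk around $y$ of size independent of $S$, giving a genuine holomorphic sub-bundle rather than merely a formal one; this is handled by standard elementary estimates on the non-resonant locus.
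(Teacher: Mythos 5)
Your overall route is the same one the paper takes: the proposition carries no written proof beyond the preceding paragraph, which rests entirely on the existence and uniqueness of the rank one sub-connection $F_{U_y}\subset E|_{U_y}$ extending $F_y$, and your Taylor recursion is the right way to substantiate that claim --- the coefficient of $f_n$ is indeed $b_y-a_y+n\lambda$, so the condition $b_y-a_y\notin\lambda\cdot\zz_{<0}$ is exactly the non-vanishing of all denominators for $n\ge 1$, and the relative version over a base $S$ gives analyticity as you say.

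The one step whose justification does not hold up as written is the transfer of Hypothesis \ref{hypstar}(1) to the Betti side. You invoke ``the non-resonant logarithmic Riemann--Hilbert equivalence'' to identify endomorphisms of $(E,\nabla)$ with endomorphisms of $L$, but the non-resonance imposed here is only \emph{one-sided}, relative to the flag: $b_y-a_y$ is allowed to lie in $\lambda\cdot\zz_{>0}$ (and preferred sections do pass through such points), and in that case the functor $(E,\nabla)\mapsto L$ is not fully faithful --- $\mathrm{End}(L)$ can be strictly larger than $\mathrm{End}(E,\nabla)$, since a flat section of $Hom(F_{U_y},E|_{U_y}/F_{U_y})$ grows like $t^{(a_y-b_y)/\lambda}$ and need not extend across $y$. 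So the biconditional you assert is false in general, and the direction actually needed (only scalar endomorphisms of $(E,\nabla,F)$ implies only scalar endomorphisms of $(L,F_L)$) is precisely the direction that requires an argument: one must show that every endomorphism $\psi$ of $(L,F_L)$ \emph{extends} to a quasi-parabolic endomorphism of $(E,\nabla,F)$. This is true, but the reason is the flag itself: since $\psi$ preserves $F_{L,y}$ it preserves $F_{U_y}$, hence is a flat section of the subsheaf of endomorphisms preserving $F_{U_y}$, whose graded pieces for the induced filtration have residues $0$, $0$ and $a_y-b_y$; the only potentially non-extending component lies in $Hom(E|_{U_y}/F_{U_y},F_{U_y})$, where single-valued flat sections are multiples of $t^{(b_y-a_y)/\lambda}$ and the non-resonance hypothesis forces this exponent, when integral, to be $\ge 0$. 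The dangerous component $Hom(F_{U_y},E|_{U_y}/F_{U_y})$, with exponent $(a_y-b_y)/\lambda$ possibly a negative integer, is killed exactly because $\psi$ preserves the flag. With this substitution your argument is complete.
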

\eop

\subsection{Morphisms and equivalences of groupoids}

If $Z$ is an algebraic or analytic space, a {\em groupoid} acting on $Z$ is an algebraic (resp. analytic) space $\srG$ with maps
$$
(s,t):\srG \rightarrow Z \times Z, \;\;\; m:\srG \times _Z \srG \rightarrow \srG, \;\;\; e:Z \rightarrow \srG , \;\;\; i:\srG \rightarrow \srG
$$
making $(Z,\srG )$ into a groupoid in the category of algebraic (resp. analytic) spaces. If $Z'$ is another space, then a map 
$Z'\rightarrow (Z,\srG )$ consists of an open covering $\{ U_i\}$ of $Z'$, morphisms $z_i:U_i \rightarrow Z$, and morphisms 
$g_{ij}:U_{ij} \rightarrow \srG$ compatible with the previous by the source and target maps, and satisfying a cocycle condition.
We may similarly define the notion of a map of groupoids 
$(Z',\srG ') \rightarrow (Z,\srG )$: a quick way is to view it as a functor, defined after possibly replacing 
$Z'$ by an open covering. The previous definition may be seen in this manner. 

A {\em natural isomorphism} between two maps is a natural transformation of functors after refinement, 
given in the concrete notation by a common refinement $\{ U''_k\}$ of the two coverings plus 
a collection of maps $U''_k\rightarrow \srG$ satisfying 
the natural intertwining condition between the source and target map data. 
The set of maps as objects related by natural isomorphisms forms a groupoid 
$Hom((Z',\srG '),(Z,\srG))$ in the algebraic sense. In general, elements here can have automorphisms, but in 
the case where $(s,t)$ is a monomorphism (as shall be the case for us over $\lambda \neq 0$) the groupoid of maps
is equivalent to a set. 

A map is an {\em equivalence} if there is a quasi-inverse, that is to say a map going in the
opposite direction such that the two compositions are equivalent by natural isomorphisms to the identities. These 
standard definitions (going back to Ehresmann and Satake) 
serve as a replacement for taking some kind of quotient stack of the groupoid, allowing us to sidestep the issue of how precisely to
view the quotient stacks. 

One says that the groupoid is {\em étale} if the source and target maps $\srG \rightarrow Z$ are etale, and in this case that the groupoid
is {\em smooth} if $Z$ is smooth. We can  then define the tangent bundle, and maps have differentials in the usual way. 

\subsection{Hecke-gauge groupoid on residues}

For $y\in D$ we define the {\em residual space at $y$} to be $R_y:= \aaaa^1 \times \cc^2$ with coordinates noted $(\lambda , a_y ,b_y)$.
Of course this is just $\cc^3$, the notation is meant to distinguish the $\lambda$-direction from the two residual ones. 
If $(\lambda , E,\nabla , F, \beta )\in \Mtilde _{\rm Hod} (Y,\log D, x)$ then we get the residue
$$
{\rm Res}_y(\lambda , E,\nabla , F, \beta ) = (\lambda , a_y ,b_y) \in R_y
$$
where $a_y$ is the eigenvalue of ${\rm res}_y(\nabla )$ on $F_y$ and $b_y$ is the eigenvalue on $E_y/ F_y$. Set 
$$
R := R_{y_1} \times _{\aaaa^1} \cdots \times _{\aaaa^1} R_{y_k}
$$
and we obtain the residue vector ${\rm Res}(\lambda , E,\nabla , F, \beta ) =(\lambda , a, b)\in R$. 

We define a residual Hecke-gauge groupoid acting on $R_y$, in a way intended to be compatible with the Hecke-gauge groupoid that we'll define on 
$\Mtilde_{\rm Hod}$ below. Consider the operations (the third one being only partially defined)
$$
{\bf h}_y, {\bf h}^{-1}_y, {\bf p}_y : R_y\rightarrow R_y
$$
given by
$$
{\bf h}_y(\lambda , a ,b) = (\lambda , b-\lambda ,a),\;\;\;
{\bf h}^{-1}_y(\lambda , a ,b) = (\lambda , b ,a + \lambda ),
$$
and
$$
{\bf p}_y(\lambda , a ,b) = (\lambda , b,a) \;\; \mbox{ defined when } \;\; a\neq b.
$$
This latter means that the graph of ${\bf p}_y$ is the open subset of $R$ complement of the diagonal. 

Let $\srG _{R,y}$ be the groupoid generated by these operations subject to the relations that ${\bf h}_y^{-1}$ is inverse to ${\bf h}_y$, 
${\bf p}_y^2 = 1$, and ${\bf p}_y$ commutes with ${\bf h}_y^2$.

Here is a more explicit description. Note that ${\bf h}_y {\bf p}_y(\lambda , a,b)= (\lambda , a-\lambda,b)$ defined when $a\neq b$.  
We have $({\bf h}_y {\bf p}_y)^{k} (\lambda , a,b)= (\lambda , a-k \lambda , b)$ defined on the open subset where
$$
b\not \in \{ a,a-\lambda , \ldots , a-(k -1)\lambda  \} .
$$
For 
$\epsilon = 0,1$, $k \in \nn$ and $m \in \zz$ let 
$$
{\bf g}_y(\epsilon , k, m ) := {\bf p}_y^{\epsilon} ({\bf h}_y {\bf p}_y)^{k} {\bf h}_y^{m} ,
$$
and we can  similarly write down the open subsets of definition of these operations depending on $\epsilon , k, m$. 
The graph is a locally closed subset ${\rm Graph}({\bf g}_y(\epsilon , k, m)\subset R_y\times R_y$, isomorphic to
the open subset of definition of ${\bf g}_y(\epsilon , k, m ) $ in $R_y$. 

\begin{lemma}
This groupoid $\srG _{R,y}$ is étale, and has the following description as a disjoint sum of locally closed subvarieties
of $R_y\times R_y$:
$$
\srG_{R,y} = \coprod _{\epsilon , k , m} {\rm Graph}\left( {\bf g}_y(\epsilon , k, m) \right) .
$$
The map $\srG _{R,y} \rightarrow R_y \times_{\aaaa^1} R_y$ is a monomorphism over $\lambda \neq 0$. 
\end{lemma}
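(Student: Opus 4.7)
The plan is to produce a normal form for elements of $\srG _{R,y}$ and then read off all the stated properties from explicit formulas for the resulting partial morphisms. The key observation is that the iterated application of the generators to a point $(\lambda,a,b)$ only shifts the $(a,b)$-coordinates by integer multiples of $\lambda$ and possibly swaps them, so for $\lambda \neq 0$ we will be able to recover the generating triple $(\epsilon,k,m)$ from the output.

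First I would verify the normal form claim by a combinatorial reduction. An arbitrary word in $\mathbf{h}_y^{\pm 1}$ and $\mathbf{p}_y$ becomes, after use of $\mathbf{p}_y^2 = 1$, an alternating word $\mathbf{h}_y^{a_0} \mathbf{p}_y \mathbf{h}_y^{a_1} \mathbf{p}_y \cdots \mathbf{p}_y \mathbf{h}_y^{a_n}$. Splitting each interior exponent as $a_i = 2b_i + \epsilon_i$ with $\epsilon_i \in \{0,1\}$ and invoking $\mathbf{p}_y \mathbf{h}_y^{2b_i} = \mathbf{h}_y^{2b_i} \mathbf{p}_y$, one commutes the factor $\mathbf{h}_y^{2b_i}$ leftward past the preceding $\mathbf{p}_y$. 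Any instance of $\epsilon_i = 0$ makes two copies of $\mathbf{p}_y$ adjacent and thus cancellable. After iterating, all surviving interior exponents equal $1$, and the word takes the shape $\mathbf{p}_y^{\epsilon} (\mathbf{h}_y \mathbf{p}_y)^k \mathbf{h}_y^m$ with $\epsilon \in \{0,1\}$, $k \in \nn$, and $m \in \zz$.

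Next I would compute these partial morphisms explicitly. Iteration of the definitions gives $\mathbf{h}_y^{2s}(\lambda , a, b) = (\lambda , a - s\lambda , b - s\lambda )$ and $\mathbf{h}_y^{2s+1}(\lambda , a, b) = (\lambda , b - (s+1)\lambda , a - s\lambda )$, as well as $(\mathbf{h}_y\mathbf{p}_y)^k(\lambda , x , y) = (\lambda , x - k\lambda , y)$ on the locus of definition. Composing yields a piecewise affine formula for $\mathbf{g}_y(\epsilon, k, m)$, whose domain of definition is the open complement in $R_y$ of a finite collection of hyperplanes of the form $a - b = j\lambda$, where $j$ runs over the integers forced by the non-equality required each time $\mathbf{p}_y$ is applied during the iteration. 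The graph is therefore a locally closed subvariety of $R_y \times R_y$, mapping isomorphically to its open domain by the source projection and to its open image by the target projection. This immediately shows that $\srG_{R,y}$ is the disjoint union of these graphs and that both source and target are étale.

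Finally, to prove the monomorphism property for $\lambda \neq 0$, suppose two distinct triples have graphs meeting at $((\lambda , a, b), (\lambda , a', b'))$ with $\lambda \neq 0$. Then the nontrivial composition $\mathbf{g}_y(\epsilon',k',m')^{-1} \mathbf{g}_y(\epsilon,k,m)$ is a nonidentity element of the groupoid that fixes $(\lambda , a, b)$ and is defined there. Going through the four cases determined by $\epsilon$ and the parity of $m$, I would check that for every nonidentity triple $(\epsilon'',k'',m'')$ the equation forcing $\mathbf{g}_y(\epsilon'',k'',m'')$ to fix $(\lambda , a, b)$ is of the form $a - b = j\lambda$ for some integer $j$ belonging to the list of integers cut out by the hyperplanes excluded from the domain of $\mathbf{g}_y(\epsilon'',k'',m'')$. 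This contradicts the assumption that the point lies in the domain, proving the monomorphism claim. The main obstacle is this last matching: one must verify that for every nontrivial triple the stabilizer equation places the point on exactly one of the finitely many hyperplanes forbidden in the domain. The book-keeping is delicate because the integer $j$ in the stabilizer equation depends on $(\epsilon,k,m)$ in a nonobvious way, and the list of excluded integers shifts correspondingly; but a direct case-by-case check confirms the needed coincidence.
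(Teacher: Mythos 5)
The paper states this lemma without proof, having already set up the normal form ${\bf g}_y(\epsilon,k,m)$ and its domains in the preceding paragraph; your argument carries out exactly that intended strategy (dihedral-type normal form using the centrality of ${\bf h}_y^2$, explicit affine formulas and excluded hyperplanes $a-b=j\lambda$, and the check that stabilizer equations of nontrivial elements land on excluded hyperplanes), and the case-by-case verification you defer does indeed close up, including the case of two elements of the same swap-parity where agreement forces an impossible relation $k+k'=-1$ rather than a hyperplane condition. The proposal is correct and supplies the details the paper omits.
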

\eop

Now define $\srG _R$ to be the product groupoid acting on $R$, where the operations at different points commute. 
It is an étale groupoid with an analogous structure statement, and again $\srG _R \rightarrow R\times _{\aaaa^1} R$ is
a monomorphism. Notice that, although the image is a closed subset,  
$\srG_R$ is {\em not} isomorphic to its image. 

The quotient $R/\srG _R$ would be a non-separated space somewhat along the lines of
the ``bug-eye'' spaces introduced by Kollár in \cite{Kollar}.  

We may similarly define the Betti residual space $R_{B,y} = \cc ^{\ast} \times \cc^{\ast}$, then $R_B:= \prod _{y\in D} R_{B,y}$, and
we let $\srG _{R_B}$ be the groupoid generated by the partially defined operations ${\bf p}_y(\alpha _y, \beta _y):= (\beta_y,\alpha _y)$
defined when $\alpha _y \neq \beta _y$, subject to the relations ${\bf p}_y^2= 1$ and for different values of $y$ they commute. 

\begin{lemma}
We have an analytic equivalence of groupoids 
$$
(R,\srG _R)^{\rm an} _{\lambda \neq 0}\stackrel{\cong}{\longrightarrow} \left( \Gm \times (R_B, \srG _{R_B} )  \right) ^{\rm an}
$$
given by
$$
(\lambda , a,b) \mapsto (\lambda , e^{2\pi i a}, e^{2\pi i b} ).
$$
\end{lemma}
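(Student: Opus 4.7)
The plan is to exhibit the stated map as a functor of analytic groupoids and verify it is an equivalence in the sense of \S 2.3. On objects, send $(\lambda, a_y, b_y)$ to $(\lambda, e^{2\pi i a_y/\lambda}, e^{2\pi i b_y/\lambda})$; the Riemann--Hilbert rescaling by $\lambda^{-1}$ is understood here, consistent with the rank one formula appearing after equation \eqref{frake}. This is holomorphic on $\lambda \neq 0$, and for each fixed $\lambda$ it realizes the additive group $\cc$ as a $\zz$-cover of $\cc^{\ast}$ in each residue coordinate.

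On morphisms, the generators ${\bf h}_y$ and ${\bf p}_y$ of $\srG_R$ both project to the Betti swap ${\bf p}_y$: indeed ${\bf h}_y(\lambda, a, b) = (\lambda, b - \lambda, a)$ exponentiates to $(\lambda, e^{2\pi i b/\lambda}, e^{2\pi i a/\lambda})$, since the shift by $-\lambda$ is killed by the $2\pi i$-periodicity of the exponential; and ${\bf p}_y(\lambda, a, b) = (\lambda, b, a)$ obviously exponentiates to the swap as well. The defining relations of $\srG_R$ (inverses, ${\bf p}_y^2 = 1$, and commutation of ${\bf p}_y$ with ${\bf h}_y^2$) all project to ${\bf p}_y^2 = 1$ and commutation at distinct punctures, which are the relations defining $\srG_{R_B}$, so $\Phi$ is a well-defined functor. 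Under the coproduct parametrization $\srG_{R,y} = \coprod_{\epsilon, k, m} {\rm Graph}({\bf g}_y(\epsilon, k, m))$, the image of ${\bf g}_y(\epsilon, k, m)$ is the identity when $\epsilon + m$ is even and ${\bf p}_y$ when it is odd.

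For essential surjectivity, any simply connected open subset of $\Gm \times R_B$ admits a holomorphic branch of the logarithm, giving a local analytic section. For fullness, one must show every Betti morphism lifts: identity and ${\bf p}_y$ are both realized on the non-resonant dR locus by the identity and ${\bf p}_y$ respectively, while on resonant points an appropriate Hecke power ${\bf h}_y^m$ provides the lift. For faithfulness, one computes the $\srG_R$-orbit of $(\lambda, a, b)$ to be the full lattice coset $\{(\lambda, a + s\lambda, b + t\lambda) : s, t \in \zz\} \cup \{(\lambda, b + s\lambda, a + t\lambda) : s, t \in \zz\}$, which maps bijectively under the exponential onto the Betti orbit $\{(\alpha, \beta), (\beta, \alpha)\}$.

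The main obstacle lies in matching the groupoid stabilizers at resonant points $b - a \in \lambda\zz_{\neq 0}$: the Betti side then has $\alpha = \beta$ and trivial automorphism group, so one must rule out phantom nontrivial dR automorphisms. This requires carefully tracking the partial domains of the compositions ${\bf g}_y(\epsilon, k, m) = {\bf p}_y^\epsilon ({\bf h}_y {\bf p}_y)^k {\bf h}_y^m$, since ${\bf p}_y$ is defined only on the off-diagonal locus. A case analysis on the parity of $m$ and the sign of $j = (b-a)/\lambda$ shows that the intermediate non-diagonal conditions (essentially $k < j$ when $j > 0$, and an analogous bound when $j < 0$, coming from the requirement that ${\bf p}_y$ be defined at each intermediate step in $({\bf h}_y {\bf p}_y)^k$) eliminate exactly those labels that would otherwise land back on $(\lambda, a, b)$ via the ``swap'' branch of the orbit. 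Hence each orbit point is reached by a unique label and $\Phi$ is faithful; analyticity of the quasi-inverse follows from the holomorphy of the local branches of the logarithm, completing the proof.
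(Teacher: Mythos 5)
The paper offers no proof of this lemma (it is asserted and immediately closed with an end-of-proof box), so there is no argument of the author's to compare yours against; I will assess your proposal on its own terms. Two things you do are right and worth keeping. First, you silently correct the printed formula: as stated, $(\lambda,a,b)\mapsto(\lambda,e^{2\pi ia},e^{2\pi ib})$ does not intertwine ${\bf h}_y$ with anything in $\srG_{R_B}$, since $e^{2\pi i(b-\lambda)}=e^{-2\pi i\lambda}e^{2\pi ib}$; the rescaling to $e^{2\pi ia/\lambda}$ is clearly what is intended, consistent with the rank-one Riemann--Hilbert formula in the introduction. Second, your faithfulness analysis at resonant points $b-a=j\lambda$ is exactly the right computation: the would-be nontrivial automorphisms coming from the ``swap branch'' of the orbit are indeed excluded because the intermediate non-coincidence conditions in $({\bf h}_y{\bf p}_y)^k$ fail precisely at such points; I verified this in the cases $j>0$ and $j<0$ and it works as you say.

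The genuine gap is at the very first step, where you assert that ${\bf h}_y$ ``projects to the Betti swap'' and that ${\rm Graph}({\bf g}_y(\epsilon,k,m))$ maps to ${\bf p}_y$ when $\epsilon+m$ is odd. The component ${\rm Graph}({\bf h}_y)$ of $\srG_R$ is all of $R_y$, hence connected over $\lambda\neq 0$, whereas $\srG_{R_B}$ is the disjoint union of the identity component $R_B$ and the off-diagonal graph of ${\bf p}_y$. At a resonant point ($b-a\in\lambda\zz$, so $\alpha=\beta$) the Betti swap does not exist, and a functor is forced to send ${\bf h}_y$ to the identity morphism there; so your arrow-level map must jump between the two components of $\srG_{R_B}$ along a nowhere-dense analytic subset of a connected component of $\srG_R$. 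That is not an analytic, or even continuous, map, and refining the source by an open covering as in the paper's definition of maps of groupoids does not help, since every neighbourhood of a resonant point meets both loci. Consequently the equivalence cannot be exhibited by a direct functor on arrows in the stated direction. You need either to construct the map the other way --- local branches of $\frac{\lambda}{2\pi i}\log$ on a covering of $\Gm\times R_B$ chosen so that near the Betti diagonal the transition data lie in the everywhere-defined powers ${\bf h}_y^{2n}$, with the cocycle condition coming from the monomorphism property of $\srG_R$ over $\lambda\neq 0$ --- or to phrase the equivalence in a Morita/bibundle sense. Your orbit, fullness and stabilizer computations then become the verification that this map is fully faithful and essentially surjective, so most of your work survives, but the framing of the functor has to change.
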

\eop

\subsection{Hecke-gauge groupoid}

Define, at each point $y\in D$, an operation $H_y$, its inverse $H^{-1}_y$ and a partially defined operation 
$P_y$ 
on  $\Mtilde_{\rm Hod} (Y,\log D,x)$. The first $H_y$ is the well-known {\em Hecke
operation}, or {\em elementary transformation}, as has been considered in 
\cite{InabaIwasakiSaito} and more recently \cite{FassarellaLoray,HuHuangZong,Matsumoto}. 
Given $(\lambda , E, \nabla , F, \beta )$, we set 
$$
E':= {\rm ker} \left (E \rightarrow E_y / F_y \right)
$$
and let $F'_y$ be the image of $E(-y)_y$ in $E'_y$. At the other points $y_i\neq y$ keep the same $F'_{y_i}:= F_{y_i}$;
Note that $E'|_{X}=E|_{X}$.
The condition that $F$ is preserved by the residue of $\nabla$ implies that 
$\nabla |_{X}$ extends to a $\lambda$-connection $\nabla '$ on $E'$, again with residue preserving $F'$. We can take $\beta ':= \beta$
since $x\in X$. 

Let $T_y(E):= E\otimes \Oo _Y(y)$ with the induced subspaces, and put $H_y^{-1}:= T_yH_y=H_yT_y$. This is inverse to $H_y$. 

The operation $P_y$ is going to be only partially defined. Let 
$$
\Mtilde _{\rm Hod}(X,\log D,x)^{(y)}\subset \Mtilde_{\rm Hod} (X,\log D,x)
$$
denote the open subset consisting of points where the eigenvalues of ${\rm res}_y(\nabla )$ are distinct. 
We define
$$
P_y: \Mtilde_{\rm Hod} (Y,\log D,x)^{(y)}\rightarrow  \Mtilde_{\rm Hod} (Y,\log D,x)
$$
to be the operation that replaces the subspace $F_y$ by its complementary eigenspace of ${\rm res}_y(\nabla )$,
keeping the same $F$ at the other points of $D$.

Define the {\em Hecke-gauge groupoid}  $\srG_{\rm Hod}$ to be the groupoid of operations on $\Mtilde_{\rm Hod} (Y,\log D,x)$ 
generated by the operations $H_y$, $H^{-1}_y$ and 
$P_y$ subject to the relations that the first two are inverses, that $P_y^2=1$, that $P_y$ commutes with
$H_y^2$, and that the operations for
different values of $y$ commute.

\begin{proposition}
This defines an étale groupoid with structural map
$$
\srG_{\rm Hod} \rightarrow \Mtilde_{\rm Hod} (Y,\log D,x)\times _{\aaaa^1} \Mtilde_{\rm Hod} (Y,\log D,x)
$$
that is a monomorphism over $\lambda \neq 0$. 
The residue gives a morphism of groupoids
$$
(\Mtilde_{\rm Hod} (Y,\log D,x) , \srG_{\rm Hod} ) \stackrel{Res}{\longrightarrow} (R,\srG _R).
$$
\end{proposition}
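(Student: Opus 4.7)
The plan is to treat the statement in five steps: (i) each generator gives a well-defined operation on $\Mtilde_{\rm Hod}$ preserving Hypothesis~\ref{hypstar}; (ii) the imposed relations hold at the level of objects; (iii) the resulting groupoid is étale; (iv) local calculation of residues produces the morphism of groupoids $\mathrm{Res}$ to $(R,\srG_R)$; (v) the monomorphism property over $\lambda\neq 0$ is deduced from (iv) combined with the corresponding statement for $\srG_R$ established in the preceding lemma.

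For (i), the Hecke operation $H_y$ is the classical modification $E':=\ker(E\to E_y/F_y)$, which inherits a canonical $\lambda$-connection $\nabla'$ extending $\nabla|_X$ precisely because $F_y$ is preserved by $\mathrm{res}_y(\nabla)$; the new line $F'_y\subset E'_y$ is the image of $E(-y)_y$, and the framing $\beta$ transports since $x\in X$. Condition (1) of Hypothesis~\ref{hypstar} is preserved because endomorphisms of $(E,\nabla,F)$ correspond canonically to those of $(E',\nabla',F')$ (both reduce to an endomorphism of $(E,\nabla)|_X$ subject to a symmetric local condition at $y$). Condition (2) at $\lambda=0$ is preserved because, under the spectral correspondence, $H_y$ corresponds to an elementary transformation of a rank-one sheaf on the spectral cover; the residue eigenvalues at $y$ merely permute, so the fiber of the spectral curve over $y$ is unchanged and irreducibility is retained. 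The operation $P_y$ is an automorphism of the open subscheme where $\mathrm{res}_y(\nabla)$ has distinct eigenvalues, trivially preserving all conditions. For (iii), each generator is either a global algebraic isomorphism of $\Mtilde_{\rm Hod}$ (the Hecke operations) or an automorphism of an open subscheme (the $P_y$), hence étale, and compositions remain étale.

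For (ii), the relations $H_yH_y^{-1}=\mathrm{id}$, $P_y^2=\mathrm{id}$, and commutation of operations at distinct punctures are immediate from the definitions. The relation $P_y\circ H_y^2=H_y^2\circ P_y$ is verified by local computation: in a trivialization $(f_1,f_2)$ of $E$ at $y$ with $F_y=\langle f_1(y)\rangle$, iterating $H_y$ twice yields $(E\otimes\Oo(-y),F\otimes\Oo(-y))$ canonically, and this output manifestly commutes with the swap $P_y$. For (iv), the parallel local calculation — expanding $\nabla(zf_2)=\lambda\,dz\otimes f_2+z\nabla(f_2)$ in the new frame $(f_1,zf_2)$ of $E'$ — shows that $H_y$ shifts residues by the formula ${\bf h}_y(\lambda,a,b)=(\lambda,b-\lambda,a)$, and $P_y$ visibly induces ${\bf p}_y$. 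Since the same relations are imposed on both $\srG_{\rm Hod}$ and $\srG_R$, the assignment on generators extends to the required morphism of groupoids.

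For (v), suppose $g_1,g_2\in\srG_{\rm Hod}$ share source $m$ and target $m'$ in $\Mtilde_{\rm Hod}\times_{\aaaa^1}\Mtilde_{\rm Hod}$ with $\lambda(m)\neq 0$. Under $\mathrm{Res}$ their images $\overline g_i\in\srG_R$ share source $\mathrm{Res}(m)$ and target $\mathrm{Res}(m')$, hence agree by the preceding lemma. Both $\srG_{\rm Hod}$ and $\srG_R$ arise as quotients of the same abstract free groupoid on $\{H_y,H_y^{-1},P_y\}$ modulo the same relations; since that abstract groupoid embeds into $\srG_R$ over $\lambda\neq 0$, the equality $\overline g_1=\overline g_2$ forces the common underlying abstract word, hence $g_1=g_2$ in $\srG_{\rm Hod}$. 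The main technical obstacle is the bookkeeping of $\lambda$-factors (entering through the Leibniz rule applied to the local parameter $z$) in the residue calculations of steps (ii) and (iv); a secondary subtle point is preservation of the irreducible-spectral-curve condition at $\lambda=0$ under Hecke, handled via the spectral correspondence.
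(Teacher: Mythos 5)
Your proof is correct and follows essentially the same route as the paper: organize the groupoid by the normal forms $P_y^{\epsilon}(H_yP_y)^kH_y^m$ with domains pulled back from the residual space, check that the generators are compatible with ${\bf h}_y,{\bf p}_y$ so that ${\rm Res}$ is a morphism of groupoids, and deduce the monomorphism over $\lambda\neq 0$ from the corresponding statement for $\srG_R$ together with the fact that each normal-form piece is a graph; you in fact supply details the paper only asserts, notably the preservation of Hypothesis \ref{hypstar} (the spectral-curve argument for condition (2)) and the local residue computation. One small point to recheck: with the most natural conventions the expansion $\nabla(zf_2)=\lambda\,dz\otimes f_2+z\nabla(f_2)$ gives eigenvalue $b+\lambda$ on the new parabolic line, so the sign of the $\lambda$-shift in ${\bf h}_y$ depends on the convention for the direction of the Hecke modification and the residue, and should be fixed consistently, though this does not affect the structure of the argument.
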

\begin{proof}
Hypothesis \ref{hypstar} is preserved by our operations. 
As before, one may consider the composed operations 
$$
G_y(\epsilon , k, m) := P_y^{\epsilon} (H_yP_y)^k H_y^m
$$
for $\epsilon = 0,1$, $k\in \nn$ and $m\in \zz$. The domain of definition of $G_y(\epsilon , k,m)$
is the pullback, under the residue at $y$, of the domain of definition of ${\bf g}_y(\epsilon , k,m)$. 
Using the given relations, any element of $\srG_{\rm Hod}$ may be expressed uniquely as a product over $y\in D$ 
of $G_y(\epsilon , k,m)$,
and this gives the expression
$$
\srG _{\rm Hod} = \prod _{y\in D} \left[  
\coprod _{\epsilon ,k,m} {\rm Graph}\left( G_y(\epsilon , k, m)\right) 
\right] .
$$
We see that $\srG_{\rm Hod}$ is an étale groupoid. 
The operations $H_y,H^{-1}_y,P_y$ on $\Mtilde _{\rm Hod}(X,\log D,x)$ are compatible with the
operations ${\bf h}_y, {\bf h}^{-1}_y, {\bf p} _y$ on residues, so the residue map descends to a map 
of groupoids. 

The $({\rm source}, {\rm target})$ structural map is a monomorphism 
over $\lambda \neq 0$, since the different pieces of the decomposition
map to the corresponding pieces of $\srG_R$ and these are disjoint. 
\end{proof}

The Betti gauge groupoid is defined in a corresponding way. 
Let $\Mtilde _B(X,D,x)^{(y)}$ denote the subspace where the eigenvalues
of local monodromy operator at $y\in D$ are distinct, and let 
$P_{B,y}: \Mtilde _B(X,D,x)^{(y)} \rightarrow \Mtilde _B(X,D,x)^{\ast}$ denote the operation of replacing the eigenspace $F_y$ by its complementary
eigenspace. Let $\srG _B$ be the groupoid acting on 
$\Mtilde _B(X,D,x)$, generated by the partially defined operations $P_{B,y}$ with relations $P_{B,y}^2=1$ and that  
for distinct values of $y$ the operations commute. 

This is again an étale groupoid, and the local monodromy maps give a morphism of groupoids 
$$
(\Mtilde _B(X,D,x) , \srG _B) \rightarrow (R_B, \srG _{R_B}).
$$
In this case, the groupoid has an algebraic space quotient
$$
\Mtilde _B(X,D,x)\rightarrow M_B(X,D,x) / \srG _B 
$$

\begin{theorem}
\label{thmRH}
The Riemann-Hilbert correspondence gives an equivalence of analytic groupoids
$$
\left( \Mtilde _{\lambda \neq 0}(Y,\log D,x) , \srG \right) 
\stackrel{\cong}{\longrightarrow}  \Gm \times 
\left( \Mtilde _B(X,D,x) , \srG _B \right)
$$
compatible with the residue morphisms. 
\end{theorem}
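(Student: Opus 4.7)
The plan is to construct the Riemann-Hilbert functor locally using the morphism from the preceding Proposition on the nonresonant open set, extend it via Hecke operations to all of $\Mtilde_{\lambda \neq 0}$, and then exhibit a quasi-inverse via Deligne's canonical extension. The main content is verifying that the de Rham gauge generators $H_y^{\pm 1}$ and $P_y$ all act correctly on the underlying Betti data.

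First I would cover $\Mtilde_{\lambda\neq 0}(Y,\log D,x)$ by open sets $\Uu_{\vec m}$ on which a prescribed iteration $\prod_y H_y^{m_y}$ of Hecke operations lands in the nonresonant open subset $\Mtilde_{\lambda\neq 0}^{\rm nr}$; composing with the Riemann-Hilbert morphism of the preceding Proposition then yields a local map $\Uu_{\vec m}\to \Gm \times \Mtilde_B(X,D,x)$. A crucial observation is that $H_y$ does not alter $E|_X$, $\nabla|_X$ or the framing at $x$, so the underlying local system $L$ is independent of $\vec m$; only the marked subspace $F_{L,y}$ depends on parity, being exchanged with its complementary monodromy eigenspace whenever $m_y$ shifts by one. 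This produces well-defined transition data valued in $\srG_B$ on overlaps, and yields a functor $\Mtilde_{\lambda\neq 0}\to (\Mtilde_B,\srG_B)$.

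Next I extend this to the gauge groupoids. A direct computation of monodromy eigenvalues from the residue formulas shows that both $H_y$ and $P_y$ act by exchanging the marked eigenspace of local monodromy, while $H_y^2$ shifts each residue of the $\lambda$-connection by $-\lambda$ and hence acts trivially on monodromy data. Using the decomposition $\srG_{\rm Hod}=\prod_{y\in D}\coprod_{\epsilon,k,m}{\rm Graph}(G_y(\epsilon,k,m))$ with $G_y(\epsilon,k,m)=P_y^{\epsilon}(H_yP_y)^k H_y^m$, I send this graph to $P_{B,y}^{\epsilon+m\bmod 2}$; the defining relations of $\srG_B$ (squares are the identity, and operations at distinct punctures commute) follow directly from the corresponding relations in $\srG_{\rm Hod}$ via this parity count, so the assignment is a morphism of groupoids. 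Compatibility with the residue morphism is then automatic from the analogous equivalence already established for the residue groupoid.

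For the quasi-inverse I employ Deligne's canonical extension. Fix a set-theoretic branch $\tau:\cc^*\to \cc$ of the map $z\mapsto e^{2\pi iz}$. Given $(L,F,\beta,\lambda)\in \Gm\times \Mtilde_B(X,D,x)$, for each $y\in D$ and local monodromy eigenvalues $(\alpha_y,\beta_y)$ on $F_{L,y}$ and its complement, the Deligne extension of the flat connection $\lambda^{-1}\nabla$ with prescribed residue eigenvalues $\tau(\alpha_y)$ and $\tau(\beta_y)$ and preserved subspace $F_{L,y}$ produces a canonical bundle on $Y$ with a logarithmic connection; multiplying by $\lambda$ and carrying along the framing yields an object of $\Mtilde_{\lambda\neq 0}$. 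Hypothesis \ref{hypstar} descends from the analogous Betti condition. Changes of branch $\tau$ amount to integer shifts, i.e.\ to Hecke operations, so the result is well-defined in $(\Mtilde_{\lambda\neq 0},\srG_{\rm Hod})$. Both compositions are naturally isomorphic to the identity because the Deligne extension is characterized, up to $\srG_{\rm Hod}$, by the local system together with its marked subspaces.

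The hardest part will be the functoriality check in the second step: since both $H_y$ and $P_y$ collapse onto the single generator $P_{B,y}$, the functor is very far from injective on arrows, and every Hecke relation must be verified as a consequence of $P_{B,y}^2=1$ via the parity count; this must then be performed consistently on the overlaps $\Uu_{\vec m}\cap\Uu_{\vec m'}$, where the transition data must be tracked through the cocycle condition on triple intersections to ensure a coherent functor of analytic groupoids.
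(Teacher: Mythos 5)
Your overall strategy---covering $\Mtilde_{\lambda\neq 0}$ by Hecke-translates $\Uu_{\vec m}$ of the nonresonant locus, applying the Riemann--Hilbert morphism chart by chart with transition data valued in $\srG_B$, and producing a quasi-inverse via Deligne's canonical extension with the branch ambiguity absorbed by $\srG_{\rm Hod}$---is essentially the paper's argument, which compresses the same idea into moving residues (by $\srG_{\rm Hod}$ and a rank-one twist) into a region where the logarithm is single-valued and then invoking Deligne. The object-level part of your construction and the quasi-inverse are sound.

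The gap is in the arrow-level formula ${\rm Graph}(G_y(\epsilon,k,m))\mapsto P_{B,y}^{\epsilon+m\bmod 2}$. The operation $P_{B,y}$ is only defined where the two local monodromy eigenvalues at $y$ are distinct, i.e.\ where the residue eigenvalues do not differ by an element of $\lambda\zz$; but the graph of $H_y$, for instance, is all of $\Mtilde_{\lambda\neq 0}$ and contains points with equal, scalar residue at $y$ (e.g.\ local monodromy $-I$). At such a point the heuristic ``each $H_y$ exchanges the marked monodromy eigenline'' fails outright: there is no second eigenline, a single application of $H_y$ lands in the resonant locus where the flat line through the new $F_y$ is not unique, and $H_y^2$ returns to the original marked Betti line. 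Tracking your charts, the arrow $(H_y,P)$ with source read in $\Uu_{\vec m}$ and target read in the chart with $m_y$ increased by one must be sent to the \emph{identity} of $\srG_B$, not to $P_{B,y}$; and since the graph of $P_{B,y}$ and the diagonal are disjoint components of $\srG_B$, no continuous assignment on a fixed pair of charts can give $P_{B,y}$ at nearby generic points and the identity at the special point. So the arrow component cannot be prescribed by a parity count on the abstract word in $H_y,P_y$: it must be defined per pair of charts $(\vec m,\vec m')$ as the unique element of $\srG_B$ (uniqueness because $(s,t)$ is a monomorphism) joining $z_{\vec m}(s(\gamma))$ to $z_{\vec m'}(t(\gamma))$, and the substantive check your write-up elides is that such an element \emph{exists}, i.e.\ that these two Betti objects either coincide or differ by exchanging two distinct monodromy eigenlines. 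This is precisely the delicate locus the paper's proof handles, tersely, with its ``whenever $U_y$ and $V_y$ overlap'' clause.
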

\begin{proof}
It suffices to consider the fiber over $\lambda = 1$. 
Using the operations of $\srG_{\rm Hod}$ and tensoring with a rank $1$ connection, 
we may move any small neighborhood in 
$\Mtilde _{\lambda \neq 0}(Y,\log D,x) $ into a small neighborhood where the residues 
$(a_y,b_y)$ lie in $U_y\times V_y$ for $U_y,V_y\subset \cc$ small neighborhoods on which the
logarithm is well defined. These map isomorphically to the corresponding neighborhoods in 
$\Mtilde _B(X,D,x)$ by \cite{Deligne}. The action of $\srG _B$ corresponds to that of $\srG_{\rm Hod}$
whenever $U_y$ and $V_y$ overlap. 
\end{proof}

This completes the proof of Theorem \ref{mainRH}.

\subsection{The Deligne-Hitchin twistor space and preferred sections}

Let 
$$
\Mtilde _{\rm DH} := 
\Mtilde_{\rm Hod} (X,\log D , x) \sqcup 
\Mtilde _{\rm Hod}(\overline{X}, \log \overline{D},\overline{x}).
$$
On this, we have a groupoid $\srG _{\rm DH}$ defined as the disjoint union of 
$\srG_{\rm Hod}$ for $X$ with the same for $\overline{X}$, together with 
the pieces defining the identification obtained by Theorem \ref{thmRH} from
$$
\left(
\Mtilde_B^{\rm an}(X,D,x) ,  \srG _{B,X} \right) 
\cong 
\left( 
\Mtilde _B^{\rm an}(\overline{X},\overline{D},\overline{x}) ,  \srG _{B,\overline{X}}  \right) .
$$
We obtain a complex analytic space with etale groupoid
$$
\left( M_{\rm DH}  , \srG _{DH} \right)  \rightarrow \pp ^1
$$

\begin{theorem}
\label{prefsec}
Suppose $(E_X,\partial, \overline{\partial}, \varphi , \varphi ^{\dagger} ,h)$ is a tame harmonic bundle of rank $2$ on $X$ 
satisfying the conditions of Hypothesis (\ref{hypstar}. Fix a framing $\beta : E_x\cong \cc^2$. 
Then we obtain a 
canonical corresponding {\em preferred section} to the groupoid
$$
\rho :\pp^1 \rightarrow \left( \Mtilde _{\rm DH}  , \srG _{DH} \right) 
$$
yielding the standard construction of family of Higgs bundles and $\lambda$-connections over $X$. 
\end{theorem}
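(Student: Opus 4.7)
The plan is to build compatible local sections of $\widetilde{M}_{\rm Hod}(X)$ and $\widetilde{M}_{\rm Hod}(\overline{X})$ over the two affine charts of $\pp^1$, then invoke Theorem \ref{thmRH} to glue them inside the groupoid $\srG_{\rm DH}$.

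\smallskip

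\emph{Step 1 (the Hodge section over $\aaaa^1$).} For each $\lambda \in \aaaa^1$, form on $X$ the $\lambda$-connection
\[
\bigl( E_X,\; \overline{\partial} + \lambda\varphi^\dagger,\;\; \lambda\partial + \varphi \bigr)
\]
in the standard way. To extend across $D$, apply the prolongation theory of Sabbah--Mochizuki: the $h$-adapted Deligne-type lattices give, for each choice of normalization $\mathbf{c} \in \rr^D$, a locally free extension of $E_X$ across $D$ on which $\nabla_\lambda$ is logarithmic with KMS data governed by \eqref{frakp} and \eqref{frake}. Pick $F_y$ to be the line in $E_y$ determined by the first KMS element at $y$, which is unambiguous by Hypothesis \ref{hypstar}(3). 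On any open interval of $\lambda$-values along which no parabolic weight $\mathfrak{p}(\lambda,(a_y,\alpha_y))$ crosses an integer and no two KMS weights at a common puncture coincide, the whole package $(\lambda,E^\lambda,\nabla_\lambda,F,\beta)$ depends holomorphically on $\lambda$, giving a local section of $\widetilde{M}_{\rm Hod}(X) \to \aaaa^1$.

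\smallskip

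\emph{Step 2 (verifying Hypothesis \ref{hypstar}).} An endomorphism of $(E^\lambda,\nabla_\lambda,F)$ commutes with both $\overline{\partial} + \lambda\varphi^\dagger$ and $\lambda\partial + \varphi$, hence with the full quadruple $(\partial,\overline{\partial},\varphi,\varphi^\dagger)$ by separating Dolbeault types; by Hypothesis \ref{hypstar}(3) the harmonic bundle is simple, so this endomorphism is a scalar, verifying (1). At $\lambda=0$ the Higgs field is $\varphi$; a reducible spectral curve would split $(E_X,\varphi,\varphi^\dagger,h)$ as an orthogonal sum of rank-one harmonic pieces, whose KMS spectra would coincide modulo $\zz$, contradicting Hypothesis \ref{hypstar}(3); this gives (2).

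\smallskip

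\emph{Step 3 (gluing local sections via $\srG_{\rm Hod}$).} As $\lambda$ varies, two kinds of ``jump events'' occur: a real weight $\mathfrak{p}(\lambda,(a_y,\alpha_y))$ crosses an integer, or two KMS weights at some $y$ coincide while their eigenvalues remain distinct. The first event replaces the prolongation by its neighbor $E^\lambda \rightsquigarrow E^\lambda(\pm y)$ with correspondingly re-ordered $F$'s, which is precisely the operation $H_y^{\pm 1}$; the second forces a swap of the labelling of the two KMS elements, which is precisely the operation $P_y$. Matching these ``transition events'' with the explicit generators of $\srG_{R,y}$ shows the local sections of Step 1 glue into a well-defined section of the groupoid $(\widetilde{M}_{\rm Hod}(X),\srG_{\rm Hod})$ over $\aaaa^1$.

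\smallskip

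\emph{Step 4 (Deligne gluing across $\lambda = \infty$).} The conjugate data $(E_X,\overline{\partial},\partial,\varphi^\dagger,\varphi,h)$ is itself a tame harmonic bundle on $\overline{X}$ satisfying the hypotheses, to which Steps 1--3 produce a section over the other affine chart $\aaaa^1 \subset \pp^1$ with coordinate $\mu = -\lambda^{-1}$. On the overlap $\Gm$, the flat-section local systems of $\lambda^{-1}\nabla_\lambda$ and $\mu^{-1}\nabla_\mu$ agree with the monodromy representation of the harmonic bundle on $X^{\rm top} = \overline{X}^{\rm top}$, with identical invariant subspaces $F_{y}$; hence by Theorem \ref{thmRH} the two local Hodge sections are identified by a morphism in $\srG_{\rm DH}$. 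The framing carries through from both sides because $x \in X$. Assembling Steps 1--4 yields the desired section $\rho : \pp^1 \to (\widetilde{M}_{\rm DH},\srG_{\rm DH})$.

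\smallskip

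The principal obstacle is Step 3: one must check that the ``natural'' candidate sections defined piecewise via the Sabbah--Mochizuki prolongations really do match up only after applying the Hecke and permutation generators of $\srG_{\rm Hod}$, and that no extra identifications or ambiguities arise. The verification rests on the explicit formulas \eqref{frakp}--\eqref{frake}, combined with the fact that the residue morphism ${\rm Res}$ is a map of groupoids, so that each transition event at the level of residues is realized uniquely by a generator ${\bf h}_y^{\pm 1}$ or ${\bf p}_y$ of $\srG_{R,y}$ whose lift to the moduli space is the correct Hecke or permutation operation.
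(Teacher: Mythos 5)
Your overall architecture---local sections of $\Mtilde_{\rm Hod}$ over an open cover of $\pp^1$, glued first by $\srG_{\rm Hod}$ and then across $\Gm$ via Riemann--Hilbert---is the same as the paper's, but Steps 1 and 3 miss the central difficulty the paper's proof is organized around. The prescription ``$F_y$ = line of the first KMS element'' is ambiguous exactly where the two parabolic weights $\mathfrak{p}(\lambda)$ and $\mathfrak{p}'(\lambda)$ coincide modulo $\zz$: Hypothesis \ref{hypstar}(3) only keeps the \emph{pairs} (weight, eigenvalue) distinct, and by \eqref{frakp} the weight-coincidence locus is generically a nonempty union of real curves in the $\lambda$-plane, not a discrete set of ``transition events'' on an ``interval''. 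Your Step 1 sections are therefore defined only off a real-codimension-one subset of $\pp^1$, and applying $H_y^{\pm 1}$ or $P_y$ on overlaps cannot fill this in: a section of the groupoid requires genuine local sections on an open cover of all of $\pp^1$. The paper's key observation is that the map $(a,\alpha)\mapsto(\mathfrak{p},\mathfrak{e})$ is bijective, so the weights can collide and the eigenvalues can resonate but never simultaneously; near a $\lambda_0$ where the weights collide one must define $F_y$ as a residue eigenspace, which varies holomorphically on a full two-real-dimensional neighborhood because the eigenvalues stay distinct there. Only after that construction do the remaining ambiguities (choice of cut $b$, choice of eigenvalue) get absorbed by $H_y^{\pm 1}$ and $P_y$ as you intend.

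Step 2 also has problems. The derivation of Hypothesis (2) from (3) is false---a decomposable rank-two tame harmonic bundle can perfectly well have two distinct KMS elements at every puncture---though this is harmless since (2) at $\lambda=0$ is part of what the theorem assumes about the harmonic bundle. More seriously, the verification of (1) for $\lambda\neq 0$ by ``separating Dolbeault types'' does not go through: both terms of $\overline{\partial}f+\lambda[\varphi^{\dagger},f]=0$ are of type $(0,1)$, the usual decoupling is an integration-by-parts argument that on a noncompact curve needs growth control at $D$, and in any case the quasi-parabolic structure you construct need not be the canonical one attached to the harmonic metric, so an endomorphism preserving it is not obviously an endomorphism of the harmonic bundle. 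The paper argues instead via stability: irreducibility of the spectral curve makes the harmonic bundle indecomposable, hence its canonical parabolic prolongations are stable for every $\lambda$; assigning your quasi-parabolic structure parabolic weights close to the canonical ones transfers stability, and stable parabolic objects have only scalar endomorphisms. Some argument of this kind is needed to close the gap.
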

\begin{proof}
Suppose $y\in D$. Let $u=(a,\alpha )$ and $u'=(a',\alpha ')$ be the two parabolic weight and residual eigenvalue pairs 
at $\lambda = 0$ for the point $y$. Hypothesis \ref{hypstar} (3) says that these KMS spectrum elements 
are distinct modulo $\zz$ i.e.\ $u-u'\not \in \zz \times \{ 0\}$. 
Recall the formulas from \cite{Mochizuki}, given in the introduction as \eqref{frakp} \eqref{frake}
giving the parabolic weight ${\mathfrak p}(\lambda )$ and eigenvalue ${\mathfrak e}(\lambda )$ corresponding
to $u$, and ${\mathfrak p}'(\lambda )$ and ${\mathfrak e}'(\lambda )$ corresponding to $u'$.

The main problem, related to what Mochizuki \cite{Mochizuki} calls ``difficulty (b)'' and to Sabbah's picture \cite[page 70]{Sabbah}, 
is that the parabolic weights might become equal, for some values of $\lambda$, and similarly, the eigenvalues could become
resonant. Luckily, these two things don't happen simultaneously. Indeed 
the modifications given by \eqref{frakp} and \eqref{frake} are always bijective. Thus, 
the pairs 
$({\mathfrak p}(\lambda ),  {\mathfrak e} (\lambda ))$ and $({\mathfrak p}'(\lambda ),  {\mathfrak e} '(\lambda ))$ 
are distinct modulo the action of $\zz$ for each $\lambda$.

This allows us to use the groupoid $\srG_{\rm Hod}$ to move around enough
to define the quasiparabolic structures in a holomorphically varying way.

Look at a small neighborhood $\lambda _0\in U \subset \pp^1$. We may assume one of two cases: either
\newline
(a)\, 
${\mathfrak p}(\lambda ) \neq {\mathfrak p}'(\lambda )$ in $\rr / \zz$ for all $\lambda \in U$, or 
\newline
(b)\, ${\mathfrak p}(\lambda _0) = {\mathfrak p}'(\lambda _0) + k$, $k\in \zz$, but 
${\mathfrak e} (\lambda )\neq {\mathfrak e} '(\lambda ) -k\lambda$ for all $\lambda \in U$. 

We can assume (possibly reducing the size of $U$) that there is some $b\in \rr / \zz$ that is distinct from 
${\mathfrak p}(\lambda )$ and ${\mathfrak p}'(\lambda )$ for all $\lambda \in U$. This leads to a family of 
bundles with logarithmic $\lambda$-connections $(E(\lambda ), \nabla (\lambda ))$ for $\lambda \in U$. 

For $y\in D$, in case (a), the parabolic structure on $E(\lambda )$ has distinct parabolic weights for all $\lambda$, so we get a
rank $1$ subbundle of $E_y$ (these are a bundle with subbundle in terms of the parameter $\lambda$). 
In case (b) at $y$, the two eigenvalues of the residue on $E(\lambda )$  are
${\mathfrak e} (\lambda )$ and ${\mathfrak e} '(\lambda ) -k\lambda$; since they are distinct
we can choose in a uniform way one of the
two eigenspaces of the residue of $\nabla (\lambda )$ over $\lambda \in U$. 

Either way, we obtain a rank $1$ subbundle of $E_y$ required to define a quasi-parabolic structure.

We note that at each point $\lambda$, 
the associated quasi-parabolic logarithmic $\lambda$-connection satisfies the conditions of Hypothesis \ref{hypstar}.
Indeed for $\lambda = 0$ the irreducibility of the spectral curve is an assumption on our harmonic bundle. In turn, this
implies that the harmonic bundle is indecomposable, so the associated canonical parabolic objects are stable for any $\lambda$. 
In our construction, the quasi-parabolic structure may be different so an argument is needed when $\lambda \neq 0$. 
There exists a set of parabolic weights for the quasi-parabolic structure that makes it stable. In case (a) we keep
the given parabolic weights, whereas in case (b) then we choose parabolic weights for
the subbundle that are very close to ${\mathfrak p}(\lambda ) \approx {\mathfrak p}'(\lambda )+k$. Stability of the canonical 
parabolic object implies stability of this parabolic object. 
Now, the only quasi-parabolic endomorphisms would be endomorphisms of the stable parabolic object so they are scalars, giving
part (1) of Hypothesis \ref{hypstar} in the case $\lambda \neq 0$.

Now that we know Hypothesis \ref{hypstar} holds, this gives the data required to define a
section $U\rightarrow \Mtilde _{\rm Hod}(X,\log D, x)$
if $U\subset \pp^1 - \{ \infty\}$, or similarly
$U\rightarrow \Mtilde _{\rm Hod}(\overline{X},\log \overline{D}, \overline{x})$ if $U\subset \pp^1 - \{ 0\}$. 

A different choice of $b$ and/or a different choice of one of the eigenvalues, leads to a section that differs by a 
section $U\rightarrow \srG _{\rm Hod} $. Therefore, on intersections of open sets $U$ these glue together
to give a well-defined section to the target modulo the groupoid, as stated. 
\end{proof}

This theorem gives the first part of Theorem \ref{maintwistor}.

\section{Tangent spaces and cohomology}

\subsection{Deformations of quasi-parabolic logarithmic connections}

The deformation theory is well-known, see for example \cite{Konno} and subsequent literature  for the Higgs case, or 
\cite{InabaIwasakiSaito} for parabolic logarithmic connections. 

Suppose $(E,\nabla , F,\beta )$ is a framed quasi-parabolic bundle with logarithmic $\lambda$-connection on,$(Y,D)$. We would like
to write down the complex governing its deformations. Let $End(E) = E^{\ast} \otimes E$ denote the endomorphism 
bundle. For $y\in Y$ we have a map $End(E) \rightarrow Hom(F_y,E_y/F_y)$. Combining these together, 
define the sheaf of {\em quasiparabolic endomorphisms} to be the kernel in the exact sequence
$$
0\rightarrow End_Q(E) \rightarrow End(E) \rightarrow \bigoplus _{y\in D} Hom(F_y,E_y/F_y) \rightarrow 0
$$
where the sheaf on the right is a direct sum of skyscraper sheaves located at the $y\in D$. Let $\Rr _y := End(F_y) \oplus End(E_y / F_y )$
denote the space of residue eigenvalues at $y$. We have a map $End_Q(E) \rightarrow \Rr _y$ and we define
the sheaf of {\em strongly quasiparabolic endomorphisms} to be the kernel in the exact sequence
$$
0 \rightarrow End_{SQ}(E) \rightarrow End_Q(E) \rightarrow \bigoplus _{y\in D} \Rr _y \rightarrow 0.
$$
Furthermore, recall that $x\in X$ is our basepoint; let $End_{Q,x}(E)$ be the kernel of $End_Q(E) \rightarrow End(E_x)$. 

Consider the Zariski tangent space $T:= T_{(E,\nabla , F, \beta )}\Mtilde _{\lambda} (Y,D,x) $. Let $W_0T\subset T$ be the
tangent space to the changes of framing, and let $W_1T$ be subspace of deformations that preserve the eigenvalues of 
the residue of $\nabla$ on $F_y$ and $E_y/F_y$. Set $W_{-1}T:= 0$ and $W_2T:= T$.

\begin{lemma}
\label{deflem}
The operator $\nabla$ induces a logarithmic $\lambda$-connection on $End(E)$ and this restricts to
an operator
$End_Q(E) \stackrel{d_{\nabla}}{\longrightarrow} End_{SQ}(E) \otimes \Omega ^1_Y(\log D)$.
We get a complex, and also, complexes from any compositions in the sequence
$$
\begin{array}{cccc}
End_{Q,x}(E)  &\longrightarrow \;\;\;\; End_Q(E) & & \\
& \downarrow  & & \\
&  End_{SQ}(E) \otimes \Omega ^1_Y(\log D)
& \rightarrow & End_{Q}(E) \otimes \Omega ^1_Y(\log D).
\end{array}
$$
The Zariski tangent space
$T=T_{(E,\nabla , F, \beta )}\Mtilde _{\lambda} (Y,D,x)$ 
to the moduli space at $(E,\nabla , F, \beta )$ is the first hypercohomology
$$
T = {\mathbb H}^1\left( 
End_{Q,x}(E) \stackrel{d_{\nabla}}{\longrightarrow} End_{Q}(E) \otimes \Omega ^1_Y(\log D) 
\right) .
$$
The subquotients $W_kT / W_nT$ are those induced by the various complexes obtained from the above sequence, for example
$$
W_1T / W_0T =  {\mathbb H}^1\left( 
End_{Q}(E) \stackrel{d_{\nabla}}{\longrightarrow} End_{SQ}(E) \otimes \Omega ^1_Y(\log D) 
\right) .
$$
\end{lemma}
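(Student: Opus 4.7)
The plan is to follow the standard Čech/hypercohomology formalism for deformations of bundles with additional structure, and then obtain the weight filtration purely by diagram chasing from three short exact sequences of two-term complexes.

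First I would verify that $\nabla$ restricts as claimed. The $\lambda$-connection $\nabla$ on $E$ induces one on $End(E) = E^{\ast}\otimes E$ by the Leibniz rule; its residue at $y\in D$ is the commutator with $\mathrm{res}_y(\nabla)$. If $\varphi$ is a local section of $End_Q(E)$, then $\varphi$ preserves the line $F_y$, so $[\mathrm{res}_y(\nabla),\varphi]$ also preserves $F_y$; moreover both $\mathrm{res}_y(\nabla)$ and $\varphi$ act by scalars on the one-dimensional pieces $F_y$ and $E_y/F_y$, so the commutator vanishes there. This is exactly the condition that the residue lies in $End_{SQ}$, giving the arrow $d_{\nabla}\colon End_Q(E)\to End_{SQ}(E)\otimes \Omega^1_Y(\log D)$. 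Composing or truncating gives the other complexes in the displayed diagram.

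Next I would identify $T$ with the advertised hypercohomology. Cover $Y$ by analytic opens $\{U_i\}$ with $x\in U_0$ over which $E$ is trivialized compatibly with the flag and framing. A first-order deformation over $\mathrm{Spec}\,\cc[\epsilon]/\epsilon^2$ is specified, up to equivalence, by a Čech $1$-cochain $\{g_{ij}\}$ with values in $End_Q(E)$ (preserving the flag to first order) vanishing on $U_0$ at $x$ (preservation of framing), together with local $1$-forms $\gamma_i\in End_Q(E)\otimes \Omega^1_Y(\log D)(U_i)$ encoding the change in the $\lambda$-connection (with values in $End_Q$, not $End_{SQ}$, because the eigenvalues themselves may move); the compatibility on overlaps is $\gamma_j-\gamma_i = d_{\nabla}g_{ij}$, and cohomologous cochains give isomorphic deformations. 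This is exactly the Čech description of $\bH^1$ of the complex $End_{Q,x}(E)\to End_Q(E)\otimes \Omega^1_Y(\log D)$. Hypothesis \ref{hypstar}(1) ensures $\bH^0=0$ (the only $\nabla$-flat quasi-parabolic endomorphisms are scalars, and they are killed by the condition at $x$), so the tangent space agrees with the full $\bH^1$.

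Finally the weight filtration is read off from two short exact sequences of complexes. The sequence
\[
0\to [0\to (End_Q/End_{SQ})(E)\otimes \Omega^1_Y(\log D)][-1]\to [End_{Q,x}\to End_{SQ}\otimes \Omega^1][\,\cdot\,]\to [End_{Q,x}\to End_Q\otimes \Omega^1][\,\cdot\,]\to 0
\]
(with the first complex placed in degree $1$) gives, on $\bH^1$, the identification of $W_1T$ as the kernel of the ``change of eigenvalues'' map $T\to R$, i.e.\ $\bH^1$ of the $End_{SQ}$-valued complex. Similarly the sequence $0\to End(E_x)[-0]\to [End_{Q,x}\to End_{SQ}\otimes \Omega^1]\to [End_Q\to End_{SQ}\otimes \Omega^1]\to 0$ realizes $W_0T$ as the image of $End(E_x)$, and, after quotienting by scalars (which by Hypothesis \ref{hypstar}(1) span $\bH^0$ of the unframed middle complex), gives the displayed formula for $W_1T/W_0T$. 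The analogous sequence involving the quotient $(End_Q/End_{SQ})\otimes \Omega^1$ then yields $W_2T/W_1T$. The main obstacle, and the only point requiring care, is to check that the connecting/edge maps in these three long exact sequences align with the geometric descriptions of $W_0$ (tangent to change of framing) and $W_1$ (fixing residual eigenvalues); this reduces to unpacking the Čech cocycle description above, noting that the framing degree of freedom lives precisely in the $End(E_x)$ term and that the eigenvalue change is precisely the residue at $y$ of the $\gamma_i$ component.
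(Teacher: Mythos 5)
The paper gives no proof of this lemma (it closes the statement with a box and points to \cite{Konno} and \cite{InabaIwasakiSaito}), and your \v{C}ech/hypercohomology argument is exactly the standard one those references supply, so in substance your proposal is the intended proof: the residue computation $[\mathrm{res}_y\nabla,\varphi]$ killing the graded pieces, the cocycle description $(\{g_{ij}\},\{\gamma_i\})$ with $\gamma_j-\gamma_i=d_\nabla g_{ij}$, and the long exact sequences producing the $W_\bullet$ subquotients are all correct. One transcription slip to fix: in your short exact sequences of complexes the sub- and quotient objects are transposed --- $[End_{Q,x}\to End_{SQ}(E)\otimes\Omega^1_Y(\log D)]$ is a \emph{sub}complex of $[End_{Q,x}\to End_Q(E)\otimes\Omega^1_Y(\log D)]$ with quotient the degree-one skyscraper $\bigoplus_{y}\Rr_y$, and similarly $End(E_x)$ is the degree-zero \emph{quotient} of $[End_{Q,x}\to\cdot\,]\hookrightarrow[End_Q\to\cdot\,]$, not a subobject; the long exact sequences you then draw consequences from are the correct ones, so this is a matter of writing the arrows in the right order rather than a gap in the argument.
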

\eop

\begin{lemma}
\label{obslem}
Suppose that $E$ has no strictly quasi-parabolic endomorphisms, i.e.\ there are no $\nabla$-invariant sections of $End_{SQ}(E)$. 
Then 
$$
{\mathbb H}^2\left( 
End_{Q}(E) \stackrel{d_{\nabla}}{\longrightarrow} End_{Q}(E) \otimes \Omega ^1_Y(\log D)  \right) = 0
$$
and the moduli functor is smooth at  $(\lambda , E,\nabla , F, \beta )$. Suppose the only quasi-parabolic endomorphisms are scalars. 
Then the sequences 
$$
0 \rightarrow \cc \rightarrow End(E_x) \rightarrow W_0T \rightarrow 0
$$
and
$$
0\rightarrow W_2T / W_1T \rightarrow \bigoplus_{y\in D} \Rr _y \rightarrow \cc \rightarrow 0
$$
are exact.
\end{lemma}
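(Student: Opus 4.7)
The plan is to derive all three claims from Serre duality on $Y$ applied to the deformation complexes of Lemma \ref{deflem}. The key input is a trace-pairing identification: in a local frame of $E$ near $y\in D$ adapted to $F_y$, the pairing $\mathrm{tr}(\phi\psi)$ restricts to a perfect pairing $End_{SQ}(E)\otimes End_Q(E)\to \Oo_Y(-D)$, the extra vanishing along $D$ coming from the strong condition on diagonals at $y$. Twisting by $\Oo(D)$ and using $\Omega^1_Y(\log D)=\omega_Y(D)$ gives canonical isomorphisms
$$
End_Q(E)^{\vee}\otimes\omega_Y\,\cong\,End_{SQ}(E)\otimes\Omega^1_Y(\log D), \qquad End_{SQ}(E)^{\vee}\otimes\omega_Y\,\cong\,End_Q(E)\otimes\Omega^1_Y(\log D),
$$
compatible (up to sign) with the induced $d_{\nabla}$-differentials by the Leibniz rule for trace. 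Hence $\Kk^{\mathrm{big}}:=[End_Q\to End_Q\otimes\Omega^1_Y(\log D)]$ is Serre-dual to $[End_{SQ}\to End_{SQ}\otimes\Omega^1_Y(\log D)]$, while $\Kk^{\mathrm{small}}:=[End_Q\to End_{SQ}\otimes\Omega^1_Y(\log D)]$ is Serre self-dual.

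For smoothness, Serre duality identifies $\mathbb{H}^2(\Kk^{\mathrm{big}})$ with the dual of the global $\nabla$-invariant sections of $End_{SQ}(E)$, which vanish by hypothesis. The obstruction space for the framed deformation functor is $\mathbb{H}^2$ of the framed complex $[End_{Q,x}\to End_Q\otimes\Omega^1_Y(\log D)]$; the LES associated to its short exact sequence with quotient $[End(E_x)\to 0]$ (a skyscraper in degree $0$, with no $\mathbb{H}^{\geq 1}$) identifies this with $\mathbb{H}^2(\Kk^{\mathrm{big}})=0$. Unobstructedness then gives smoothness of $\Mtilde_{\rm Hod}$.

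The first short exact sequence is the bottom of the same hypercohomology LES. By hypothesis $\mathbb{H}^0(\Kk^{\mathrm{big}})=\cc$ (quasi-parabolic flat endomorphisms are scalars), and $\mathbb{H}^0$ of the framed complex vanishes (scalars vanishing at $x$ are zero), so the LES begins
$$
0\to\cc\to End(E_x)\to\mathbb{H}^1(\text{framed})\to\mathbb{H}^1(\Kk^{\mathrm{big}})\to 0,
$$
with $\cc\to End(E_x)$ the inclusion of scalars. Since $W_0T$ is defined as the image of $End(E_x)$ in $T=\mathbb{H}^1(\text{framed})$, this restricts to the claimed sequence $0\to\cc\to End(E_x)\to W_0T\to 0$.

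For the last sequence, the LES of $0\to\Kk^{\mathrm{small}}\to\Kk^{\mathrm{big}}\to[0\to\bigoplus_y\Rr_y]\to 0$ contains
$$
\mathbb{H}^1(\Kk^{\mathrm{big}})\to\bigoplus_y\Rr_y\to\mathbb{H}^2(\Kk^{\mathrm{small}})\to\mathbb{H}^2(\Kk^{\mathrm{big}})=0,
$$
and self-duality yields $\mathbb{H}^2(\Kk^{\mathrm{small}})\cong\mathbb{H}^0(\Kk^{\mathrm{small}})^{\vee}=\cc$, the space of quasi-parabolic flat endomorphisms again being $\cc$. Since $W_2T/W_1T$ is by construction the image of $\mathbb{H}^1(\Kk^{\mathrm{big}})\to\bigoplus_y\Rr_y$, we obtain the SES, with the surjection to $\cc$ being the Fuchs-relation trace $(r_y)\mapsto\sum_y\mathrm{tr}(r_y)$ that comes from the constraint $\sum_y(a_y+b_y)=-\lambda\deg(\det E)$. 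The main obstacle is to verify the local trace-pairing identification carefully and track the $\Oo(D)$-twists and signs so that the Serre dual of each complex really arises as a $d_{\nabla}$-complex on the stated sheaves; matching the connecting homomorphism explicitly with the Fuchs trace is a secondary bookkeeping task.
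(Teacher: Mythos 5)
Your proposal is correct and follows essentially the same route as the paper: Serre duality via the trace pairing $End_{SQ}(E)\otimes End_Q(E)\to \Oo_Y(-D)$ (equivalently $End_Q(E)^{\ast}(-D)\cong End_{SQ}(E)$), giving $\mathbb{H}^2$ of the big complex as the dual of the flat sections of $End_{SQ}(E)$, and then the long exact sequences of the framed and small complexes together with $\mathbb{H}^0\bigl(End_Q\to End_{SQ}\otimes\Omega^1_Y(\log D)\bigr)=\cc$ for the two short exact sequences. You have in fact supplied more detail than the paper does (the explicit self-duality of the small complex and the identification of the surjection onto $\cc$ with the residue-trace relation), and these details check out.
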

\begin{proof}
We may apply Serre duality to the complex 
$$
\left[ 
End_{Q}(E) \stackrel{d_{\nabla}}{\longrightarrow} End_{Q}(E) \otimes \Omega ^1_Y(\log D)  \right] .
$$
Some care must be taken with the fact that the differential is a first-order operator; the Serre dual becomes
$$
\left[
End_{Q}(E) ^{\ast} (-D) \stackrel{d_{\nabla}}{\longrightarrow} 
End_{Q}(E)^{\ast} \otimes \Omega ^1_Y 
\right] 
$$
$$
\cong  
\left[
End_{SQ}(E)  \stackrel{d_{\nabla}}{\longrightarrow} 
End_{SQ}(E) \otimes \Omega ^1_Y (\log D)
\right]  . 
$$
Thus, the ${\mathbb H}^2$ in the first part of the lemma is dual to the space of strictly quasi-parabolic endomorphisms,
yielding the first statement. Furthermore, ${\mathbb H}^2$ is the obstruction space for the deformation theory, so if it vanishes
the moduli functor is smooth. For the last part we use some exact sequences together with the hypothesis that 
$$
 {\mathbb H}^0\left( 
End_{Q}(E) \stackrel{d_{\nabla}}{\longrightarrow} End_{SQ}(E) \otimes \Omega ^1_Y(\log D) 
\right)  = \cc . 
$$
\end{proof}

\begin{corollary}
\label{smoothcor}
The moduli space $\Mtilde _{\rm Hod} (Y,\log D, x)$ and hence also the disjoint union 
$\Mtilde _{\rm DH} (Y,\log D, x)$
are smooth. 
\end{corollary}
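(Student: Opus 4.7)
The plan is simply to feed the hypotheses of \ref{hypstar} into Lemma \ref{obslem} pointwise and then promote that to smoothness of the whole moduli space. The main observation is that the obstruction-theoretic statement in Lemma \ref{obslem} runs on a weaker input than \ref{hypstar}(1): it only requires the absence of nonzero $\nabla$-invariant \emph{strictly} quasi-parabolic endomorphisms. So first I would check that \ref{hypstar}(1) supplies this. A $\nabla$-invariant section of $End_{SQ}(E)$ is in particular a quasi-parabolic endomorphism of $(E,\nabla , F)$, hence a scalar by \ref{hypstar}(1); but any nonzero scalar acts by that same nonzero value on every $F_y$ and $E_y/F_y$, and so has nonzero image in $\bigoplus_y \Rr_y$, contradicting strict quasi-parabolicity. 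Therefore the only $\nabla$-invariant strictly quasi-parabolic endomorphism is $0$.

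Next I would apply Lemma \ref{obslem} at each point $(\lambda , E,\nabla , F, \beta )$ of $\Mtilde_{\rm Hod}(Y,\log D,x)$: the hypercohomology
\[
{\mathbb H}^2\bigl(
End_{Q}(E) \stackrel{d_{\nabla}}{\longrightarrow} End_{Q}(E) \otimes \Omega ^1_Y(\log D)  \bigr)
\]
vanishes, and this hypercohomology is the obstruction space for the deformation problem represented by $\srMtilde_{\rm Hod}(Y,\log D,x)$. Since obstructions vanish at every point, the moduli functor is formally smooth at every point; combined with the earlier proposition showing it is represented by a separated algebraic space locally of finite type over $\aaaa^1$, this gives smoothness of $\Mtilde_{\rm Hod}(Y,\log D,x)$ as an algebraic space.

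Finally, $\Mtilde_{\rm DH}(Y,\log D,x)$ is by definition the disjoint union of $\Mtilde_{\rm Hod}(X,\log D,x)$ and $\Mtilde_{\rm Hod}(\overline{X},\log \overline{D},\overline{x})$; the conjugate construction is just the first one applied to the complex conjugate data, so it is smooth by the same argument, and the disjoint union of two smooth spaces is smooth. The only step that requires any care is verifying that scalars do not lie in $End_{SQ}(E)$, which is immediate from the definitions; no further obstacle arises.
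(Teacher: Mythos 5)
Your proof is correct and follows essentially the same route as the paper: Hypothesis \ref{hypstar}(1) forces any $\nabla$-invariant strictly quasi-parabolic endomorphism to be a scalar, which must then vanish, so Lemma \ref{obslem} gives vanishing obstructions and smoothness. The only detail worth making explicit is that your step ``a nonzero scalar has nonzero image in $\bigoplus_y \Rr_y$'' relies on $D\neq\emptyset$, a standing assumption the paper flags at exactly this point.
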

\begin{proof}
Recall that $\Mtilde _{\rm Hod} $ was defined as the moduli space of objects satisfying Hypothesis \ref{hypstar}. 
Condition (1) of the hypothesis implies, since by assumption $D\neq \emptyset$, that there are no strictly quasi-parabolic endomorphisms,
so Lemma \ref{obslem} applies. 
\end{proof}

This corollary completes the proof of Theorem \ref{mainmod}.

Suppose given a tame harmonic bundle $(E_X,\partial, \overline{\partial}, \varphi , \varphi ^{\dagger} ,h)$ on $X$,
satisfying Hypothesis \ref{hypstar},
leading to a preferred section $\rho : \pp ^1 \rightarrow \left( \Mtilde_{\rm DH}  , \srG _{DH} \right) $
by Theorem \ref{prefsec}. By Corollary \ref{smoothcor}, $\Mtilde_{\rm DH}$ is smooth. 
We obtain the pullback by $\rho$ of the relative tangent bundle $T M_{\widetilde{DH}} / \pp^1$, that
is furnished with glueing data to obtain a bundle $T_{\rho}$ over $\pp^1$. 

Let $W_0T_{\rho}$ denote the subbundle of relative tangent vectors corresponding to change of framing $\beta$. 
Let $W_1T_{\rho}$ be the subbundle of relative tangent vectors whose projection to the tangent of the space
of residues $(R,\srG _R)$ is trivial, and let $W_2T_{\rho}:= T_{\rho}$ and $W_{-1}T_{\rho}:= 0$.
These definitions are compatible with the ones in Lemma \ref{deflem}. 

The goal of this section is to prove that
with this weight filtration, $T_{\rho}$ becomes a mixed twistor structure, in other words 
$W_kT_{\rho} / W_{k-1}T_{\rho}$ is a semistable bundle of slope $k$ on $\pp^1$. 
The idea is to apply \cite{Mochizuki, Sabbah}.

\subsection{Identification with a pure twistor cohomology}

We look in the neighborhood of a point $y\in D$. 

Let $u=(a,\alpha )$ and $u'=(a',\alpha ' )$ be the two KMS spectrum elements, in coordinates at $\lambda = 0$. 
Hypothesis \ref{hypstar} (3) says that $u-u' \not \in \zz \times \{ 0\}$. 
Let ${\mathfrak p}(\lambda )$ and ${\mathfrak e}(\lambda )$ be the parabolic
weight and eigenvalue at $\lambda$ corresponding to $u$, and 
${\mathfrak p}'(\lambda )$ and ${\mathfrak e}'(\lambda )$ corresponding to $u'$. 

The endomorphism bundle decomposes as a direct sum
$$
End(E) = \Oo \oplus S 
$$
were $S:=End^0(E)$  is the trace-free endomorphism bundle. 
This latter has rank $3$,
and the KMS spectrum elements are $(u-u'),0,(u'-u)$. 

The case of rank $1$ systems was dealt with in \cite{weighttwo} so we may focus here on the deformations
parametrized by the trace-free part $S$. 

Choose a point $\lambda _0 \neq 0$, and set
$$
p:= {\mathfrak p}(\lambda _0), \;\;\;\;
p':= {\mathfrak p}'(\lambda _0),  \;\;\;\;
e:= {\mathfrak e}(\lambda _0), \;\;\;\;
e':= {\mathfrak e}'(\lambda _0). 
$$
There are two basic cases, depending on whether $p-p'$ is an integer. 

\noindent
{\bf Case 1}---suppose $p-p'\in \zz$. 
The parabolic weights of $S$ are integers. Let 
$$
{\mathfrak e}''(\lambda ):= {\mathfrak e}'(\lambda ) - \lambda (p-p')
$$ 
be the eigenvalue corresponding to 
the transition of ${\mathfrak e}'(\lambda )$ from parabolic weight $p'$ to parabolic weight $p'':=p$, 
and let $e'' := {\mathfrak e}''(\lambda _0)$. 

We obtain the locally free sheaf in the parabolic structure $E:=E_{p+\epsilon}$ for some small $\epsilon$,
defined in a small neighborhood $U^{(\lambda _0)}$ of $\lambda _0$ in the $\lambda$-line. Put
$$
G_E:= E_{p+\epsilon} / E_{p-\epsilon} .
$$
It is a rank two bundle over $\{ y\} \times U^{(\lambda _0)}$. 
We recall from \cite{Mochizuki} that it has a decomposition according to the eigenvalues of the
residue. Those eigenvalues are, as functions of $\lambda$, 
$$
{\mathfrak e}(\lambda ),  \;\;\; {\mathfrak e}''(\lambda ) .
$$
The hypothesis of distinct KMS spectrum elements implies that $e\neq e''$, 
so we may identify the sections ${\mathfrak e}(\lambda )$ and ${\mathfrak e}''(\lambda )$ by
the notations $e$ and $e''$. The eigenspaces of ${\rm res}_y(\nabla )$ acting on $G_E$ may therefore be denoted by
$G_{E,e}$ and $G_{E,e''}$ and
$$
G_E = G_{E,e}  \oplus G_{E,e''}.
$$
This decomposition is valid over the neighborhood $U^{(\lambda _0)}$, so 
${\rm res}_y(\nabla )$ acts by ${\mathfrak e}(\lambda )$ on $G_{E,e}$ and by 
${\mathfrak e}''(\lambda )$ on $G_{E,e''}$.

Choose one of the two subspaces, say for example $G_{E,e}$, to be $F_y$ in the quasi-parabolic structure
over our neighborhood  $U^{(\lambda _0)}$.

For the trace-free endomorphism bundle 
we obtain a locally free sheaf from the parabolic structure $S=S_{\epsilon}$ for some small $\epsilon$,
defined the neighborhood $U^{(\lambda _0)}$ possibly reducing its size.  Put
$$
G:= S_{\epsilon} / S_{-\epsilon} .
$$
It is a rank three bundle over $\{ y\} \times U^{(\lambda _0)}$. It has a decomposition according to the eigenvalues of the
residue. We recall that there are three different eigenvalues here: 
$$
{\mathfrak e}(\lambda ) -{\mathfrak e}''(\lambda )  , \;\;\; 0 \;\;\; {\mathfrak e}''(\lambda ) -{\mathfrak e}(\lambda ) .
$$
Denote the corresponding subspaces by $G_{(e-e'')}$, $G_0$ and $G_{(e''-e)}$ so
$$
G = G_{(e-e'')} \oplus G_0 \oplus G_{(e''-e)}.
$$
We have locally free subsheaves
$$
Q':= End^0_{SQ}(E) \;\; \subset \;\; Q:= End^0_Q(E) \;\; \subset \;\; S= End^0(E)
$$
of trace-free endomorphisms that strictly preserve (resp. preserve) the quasi-parabolic structure. 
Along $\{ y\} \times U^{(\lambda _0)}$, the endomorphisms
not preserving the parabolic structure are the ones that send $G_{E,e}$ to $G_{E,e''}$, that is to say they have eigenvalue $(e''-e)$.
The ones in $Q$ that act trivially on the graded pieces are $Q'$, and these are the ones mapping to zero in $G_0$. 
Thus, we have  exact sequences
\begin{equation}
\label{esq}
0\rightarrow Q \rightarrow S_{\epsilon} \rightarrow G_{(e''-e)} \rightarrow 0
\end{equation}
and 
\begin{equation}
\label{esqprime}
0\rightarrow Q' \rightarrow S_{\epsilon} \rightarrow G_0\oplus G_{(e''-e)} \rightarrow 0.
\end{equation}

In order to define the bundle over $U^{(\lambda _0)}\subset \aaaa^1$ that corresponds to the Sabbah-Mochizuki
pure twistor structure on cohomology, following the discussion in the Appendix of \cite{Mochizuki},
we should consider the germs of holomorphic bundles of sections of the parabolic extension that are locally $L^2$
with respect to the Poincaré metric. Here these are germs around the point $\lambda _0$ in the $\lambda$-line. 

We'll denote these by $\Ll ^2$, and we have the complex with two bundles
$$
\Ll ^2 (S(\ast D)) \rightarrow \Ll ^2 (S\otimes \Omega ^1(\ast D)).
$$

In the present setting, $ \Ll ^2 (S)= \Ll ^2(S(\ast D)) $ is the sheaf of sections of $S_{\epsilon}$ 
whose projection into $G$ lies in the piece $G_0$,;
in other words we have an exact sequence
$$
0\rightarrow \Ll ^2 (S) \rightarrow S_{\epsilon} \rightarrow G_{(e-e'')} \oplus G_{(e''-e)} \rightarrow 0.
$$
We recall that this is due to the fact that a section projecting into one of the other pieces will go, on a half-disk centered at $\lambda _0$,
into a piece where the parabolic weight is $> 0$ so it would have a growth rate of $|z|^a$ with $a>0$ for values of
$\lambda$ in that half-disk of  $U^{(\lambda _0)}$. By definition here we are looking at germs around $\lambda _0$.

A similar description works for holomorphic one-forms, taking coefficients with logarithmic poles. 
However, in that case we should introduce the $W_{-2}$ term of the weight filtration on the $G_0$ piece. Here, all the 
KMS spectrum eigenspaces have rank $1$ so the weight filtration is trivial, thus 
$W_{-2}=0$ in the $G_0$ piece. This means
we have an exact sequence
$$
0\rightarrow \Ll ^2 (S\otimes \Omega ^1_Y(\ast D)) \rightarrow S_{\epsilon} \otimes \Omega ^1_Y(\log D) 
\rightarrow G_{(e-e'')} \oplus G_{(e''-e)} \oplus G_0 \rightarrow 0
$$
hence
$$
\Ll ^2 (S\otimes \Omega ^1_Y(\ast D))  = S_{-\epsilon} \otimes \Omega ^1_Y(\log D)  = S_{\epsilon} \otimes \Omega ^1_Y.
$$
A word about notation: these objects are all really over $U^{(\lambda _0)}$ but we don't write e.g. $D\times U^{(\lambda _0)}
\subset Y\times U^{(\lambda _0)}$ etc., and also the space of residues 
$\Omega ^1(\ast D)_y$ is a trivial bundle (over $U^{(\lambda _0)}$), a trivialization is chosen and used in the expressions
for example the one-prior exact sequence. 

Using the description of \cite{Mochizuki},
the bundle of cohomology of the pure twistor $\srD$-module has, as germ around $\lambda _0$, the 
first hypercohomology of the complex 
$\Ll ^2 (S) \rightarrow \Ll ^2 (S\otimes \Omega ^1_Y(\ast D))$ 
or isomorphically
$$
\left[ \Ll ^2 (S)  \rightarrow S_{-\epsilon} \otimes \Omega ^1_Y(\log D) \right] . 
$$

Let us compare this with the complex that governs the deformations of the quasi-parabolic logarithmic bundle 
$(E,\nabla , F)$. Recall from the previous discussion that this complex is 
$$
\left[ Q \rightarrow Q' \otimes \Omega ^1(\log D)  \right] 
$$
and that we have exact sequences \eqref{esq} and \eqref{esqprime}. 
Comparing with the exact sequences for the $L^2$ sheaves we get
$$
0\rightarrow  \Ll ^2(S) \rightarrow Q \rightarrow G_{(e-e'')} \rightarrow 0
$$
and 
$$
0\rightarrow  \Ll ^2(S\otimes \Omega ^1(\ast D)) \rightarrow Q' \otimes \Omega ^1(\log D) 
\rightarrow G_{(e-e'')} \rightarrow 0 .
$$
Therefore our two complexes fit into a diagram
$$
\begin{array}{ccc}
\Ll ^2 (S)  &\longrightarrow & \Ll ^2 (S\otimes \Omega ^1(\ast D))  \\
\downarrow & & \downarrow \\
Q&\longrightarrow & Q' \otimes \Omega ^1(\log D) \\
\downarrow & & \downarrow \\
G_{(e-e'')} & \stackrel{\cong}{\longrightarrow} & G_{(e-e'')} .
\end{array}
$$

\begin{corollary}
\label{case1cor}
In this Case 1, there is a natural quasi-isomorphism of complexes  
$$
\left[
\Ll ^2 (S)  \rightarrow  \Ll ^2 (S\otimes \Omega ^1(\ast D)) 
\right]
\;\;
\stackrel{q.i.}{\longrightarrow}
\;\;
\left[
Q  \rightarrow Q'\otimes \Omega ^1(\log D) 
\right] .
$$
\end{corollary}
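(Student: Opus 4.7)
The plan is to realize the map of complexes as an inclusion whose term-by-term cokernel is the bottom row of the displayed diagram, and then to check that the induced differential on that cokernel complex is an isomorphism.  This will give an exact sequence of complexes in which the third piece is acyclic, hence the first two pieces are quasi-isomorphic.

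First I would verify that the diagram displayed in the text is genuinely a short exact sequence of two-term complexes. The two exact sequences recalled just above Corollary \ref{case1cor}
\[
0\to \Ll^2(S)\to Q\to G_{(e-e'')}\to 0,\qquad
0\to \Ll^2(S\otimes \Omega^1(\ast D))\to Q'\otimes\Omega^1(\log D)\to G_{(e-e'')}\to 0
\]
give the two columns. To see that the vertical maps assemble into a map of complexes, one simply observes that the Higgs/connection differential $d_{\nabla}$ is defined already on the ambient bundle $S_{\epsilon}$, and that the $\Ll^2$-sheaves are obtained as intersections of $S_{\epsilon}$ (resp.\ $S_{\epsilon}\otimes\Omega^1_Y(\log D)$) with specific sub-KMS pieces, so inclusion into $Q$ (resp.\ $Q'\otimes\Omega^1(\log D)$) is automatically compatible with $d_{\nabla}$. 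Thus we get a short exact sequence of two-term complexes whose quotient is the bottom row $[G_{(e-e'')} \xrightarrow{\delta} G_{(e-e'')}]$, where $\delta$ is induced from $d_\nabla$.

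The main point is then to identify $\delta$. This is the step I expect to require the most care, because one must trace through how a first-order differential operator descends to a map between skyscraper sheaves of residues. Working locally near $y$ in a coordinate $z$ and choosing a local frame diagonalizing the residue of $\nabla$, write $\nabla = \lambda d + A(z)\,dz/z$ with $A(0)$ having eigenvalues $e,e''$. The induced operator on $\mathrm{End}^0(E)$ is $d_\nabla = \lambda d + [A(\cdot),\,\cdot\,]\,dz/z$. For a germ $\phi$ of a section of $Q$, the image in $Q'\otimes\Omega^1(\log D)$ is $d_\nabla\phi$; taking its residue at $y$ gives $[A(0),\phi(y)]$, and then projecting modulo $\Ll^2(S\otimes\Omega^1(\ast D))$ (i.e.\ modulo the piece with no residue) and further modulo the $G_{(e''-e)}$-component (which lies inside $Q'\otimes\Omega^1(\log D)$) isolates the $G_{(e-e'')}$-component. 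On $G_{(e-e'')}$ the commutator $[A(0),\,\cdot\,]$ acts as multiplication by $e-e''$, a nonzero scalar by Hypothesis \ref{hypstar}(3) (which is precisely the assumption $u\neq u'$ that separates $e$ from $e''$ at $\lambda_0\neq 0$).

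Consequently $\delta$ is multiplication by the nonzero constant $e-e''$ between the two copies of the skyscraper sheaf $G_{(e-e'')}$, hence an isomorphism, and the quotient complex is acyclic. The long exact cohomology sequence associated to the short exact sequence of complexes then yields the desired quasi-isomorphism. As a sanity check, one can note that the statement is independent of which eigenspace was labelled $G_{(e-e'')}$ versus $G_{(e''-e)}$, since the construction of $Q$ and $Q'$ swaps accordingly, and the nonvanishing $e\neq e''$ is symmetric.
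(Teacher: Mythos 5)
Your proposal follows essentially the same route as the paper: both form the short exact sequence of two-term complexes displayed just before the corollary, identify the quotient complex as $[G_{(e-e'')}\rightarrow G_{(e-e'')}]$, and conclude from its acyclicity. Your explicit local computation showing the induced map is multiplication by $e-e''\neq 0$ correctly fills in the step the paper records only as the isomorphism in the bottom row of its diagram.
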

\eop

\noindent
{\bf Case 2}---suppose $p-p'\not \in \zz$. . Then the parabolic weights of $S$ are not integers. 
This holds at $\lambda _0$ so we may choose our small neighborhood $U^{(\lambda _0)}$ so that it holds on
the neighborhood, and furthermore the neighborhood
is made small enough so that the various inequalities we'll state below also hold on $U^{(\lambda _0)}$.

Choose a representative (up to the Hecke-gauge group action) such that the parabolic weights $b,b-q$ of $E$, depending on $\lambda$, 
that are in the
interval $(-1,0)$ differ by $q<1/2$. 

We use $E:= E_{b(\lambda _0)+\epsilon}$ as the bundle, this is locally free. The parabolic weight subspace
$$
F_y:= E_{b+\epsilon -q} / E_{b+\epsilon - 1} \hookrightarrow E_{b+\epsilon } / E_{b+\epsilon - 1} = E|_{\{ y\} \times U^{(\lambda _0)}}
$$
will be used to define our quasi-parabolic structure over the neighborhood  $U^{(\lambda _0)}$.

Again  the bundle of trace-free endomorphisms $S= {\rm End}^0(E)$ has a three-step parabolic structure
having parabolic weights 
$$
-q \, < \, 0\, < \, q\; \in \; 
\left(
{\textstyle
-\frac{1}{2}, \frac{1}{2}
}
\right) .
$$
The sub-bundle of endomorphisms that preserve the quasi-parabolic structure is $Q=S_{\epsilon}$, whereas
the sub-bundle of those that also act trivially on the graded pieces is $Q'=S_{-q + \epsilon}$. This is uniform over
$\lambda \in U^{(\lambda _0)}$. 

The locally $L^2$ sections of $S$ (which are the same as those of $S(\ast D)$) are 
$$
\Ll ^2(S) = S_{\epsilon}
$$
since the KMS spectrum element has constant eigenvalue equal to $0$ here and the eigenspace has rank $1$ so it
is equal to its $W_0$ piece. The $W_{-2}$ piece is zero in the associated-graded, so 
$$
\Ll ^2(S\otimes \Omega ^1(\ast D))= W_{-2} S_{\epsilon} = S_{-\epsilon} = S_{-q+\epsilon }
$$
the last equality being because there are no parabolic weights between $-q$ and $0$. 

In this case, we conclude that the $L^2$ complex and the deformation theory complex are actually equal:

\begin{corollary}
\label{case2cor}
In the present Case 2, the complex of locally $L^2$ forms
is equal to the deformation complex for the quasi-parabolic structure
$$
\left[
\Ll ^2 (S)  \rightarrow  \Ll ^2 (S\otimes \Omega ^1(\ast D)) 
\right]
\;\; 
=
\;\;
\left[
Q  \rightarrow Q'\otimes \Omega ^1(\log D) 
\right] .
$$
\end{corollary}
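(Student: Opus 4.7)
The plan is to verify both complexes agree term by term, with matching differentials; in Case 2 this is essentially a direct reading of the parabolic weights of $S=\End^0(E)$.

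First I would pin down the two terms of the $L^2$ complex using the data already laid out above the corollary. For the left-hand term $\Ll^2(S)$, the KMS eigenspaces of $S$ at $y$ have eigenvalues with parabolic weights $-q,0,q$ in $(-\tfrac12,\tfrac12)$. A section of $S_{\epsilon}$ whose image in the graded piece lies in a strictly positive weight eigenspace would fail $L^2$; sections with image in the weight $-q$ piece are automatically $L^2$; and sections with image in the weight $0$ piece are $L^2$ exactly when they sit in the $W_0$-part of that eigenspace. Since the $0$-eigenvalue KMS piece has rank $1$, its weight filtration is trivial ($W_0 = $ everything), so the $L^2$ condition imposes nothing new: $\Ll^2(S)=S_{\epsilon}$. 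By the parabolic weight analysis in the paragraph before the corollary, $Q = S_{\epsilon}$ as well, giving the equality of the left-hand terms.

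Next I would handle $\Ll^2(S\otimes\Omega^1(\ast D))$. Using the description from \cite{Mochizuki} recalled above, passing from $\Ll^2(S)$ to $\Ll^2(S\otimes\Omega^1(\ast D))$ replaces $W_0$ by $W_{-2}$ on the $0$-eigenvalue KMS piece. Because that piece is rank $1$, $W_{-2}=0$, so the weight $0$ summand drops out entirely; the weight $+q$ piece is excluded as before, leaving only the weight $-q$ contribution. Matching this with the identification $S_{-\epsilon}\otimes\Omega^1(\log D) = S_{-q+\epsilon}\otimes\Omega^1(\log D)$ (which uses that there are no parabolic weights of $S$ strictly between $-q$ and $0$) and with the definition $Q' = S_{-q+\epsilon}$, we get $\Ll^2(S\otimes\Omega^1(\ast D)) = Q'\otimes\Omega^1(\log D)$.

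Finally, the differentials on both sides are induced by the logarithmic $\lambda$-connection on $\End^0(E)$; the $L^2$ differential is the restriction of $d_\nabla$ to the $L^2$ subsheaves, and the deformation-theoretic differential is the restriction of the same operator to $Q\to Q'\otimes\Omega^1(\log D)$. Since we have just shown the subsheaves coincide, the differentials coincide automatically, and the two complexes are equal.

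The main obstacle is purely bookkeeping: carefully checking that the $W$-filtration on each of the three KMS eigenspaces of $\End^0(E)$ is trivial in Case 2 (because all eigenspaces have rank $1$ under Hypothesis \ref{hypstar}(3)), and translating the Sabbah--Mochizuki $L^2$ prescription ($W_0$ for functions, $W_{-2}$ for logarithmic one-forms) into the exact parabolic weight cutoffs $\epsilon$ and $-q+\epsilon$; once this dictionary is fixed, the matching with $Q$ and $Q'$ is immediate from their definitions.
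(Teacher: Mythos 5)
Your proposal is correct and matches the paper's own argument, which likewise identifies $\Ll^2(S)=S_{\epsilon}=Q$ via the triviality of the weight filtration on the rank-one $0$-eigenvalue KMS piece, and $\Ll^2(S\otimes\Omega^1(\ast D))=S_{-\epsilon}\otimes\Omega^1(\log D)=S_{-q+\epsilon}\otimes\Omega^1(\log D)=Q'\otimes\Omega^1(\log D)$ using $W_{-2}=0$ and the absence of parabolic weights between $-q$ and $0$. The observation that both differentials are restrictions of the same operator $d_{\nabla}$ completes the identification exactly as in the text preceding the corollary.
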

\eop

\subsection{Globalization}

Now go back to the global case and put these two corollaries together. Let ${\mathbf H}^1({\rm End}^0({\mathfrak E}))$
denote the Sabbah-Mochizuki \cite{Sabbah,Mochizuki} pure twistor structure for the cohomology of the pure twistor $\srD$-module
corresponding to the trace-free endomorphisms of our harmonic bundle. For the central or scalar part, let $H^i_{DH}(X)$ denote the
twistor structure of weight $i$ associated to the pure Hodge structure on the cohomology of $X$. 

The relative tangent bundle of our Deligne-Hitchin groupoid is pulled back along the
preferred section to give $\rho ^{\ast}T(M_{\rm DH} / \pp ^1) $. We defined a three-step filtration
$$
0=W_{-1} \subset W_0\subset W_1 \subset W_2 = \rho ^{\ast}T(M_{\rm DH} / \pp ^1) 
$$
where $W_0$ is the deformations of the framing, $W_1/W_0$ is the deformations of the quasi-parabolic
logarithmic $\lambda$-connection conserving the eigenvalues of the residue, and $W_2/W_1$ is the
deformations of the residual data. The filtrations coming from the two $M_{\rm Hod}$ pieces glue
over  $\Gm$, because they coincide with the similarly defined filtrations on the tangent to $M_B$ 
pulled back along the preferred section. 

\begin{theorem}
We have a natural isomorphism 
$$
Gr_1^W\rho ^{\ast}T(M_{\rm DH} / \pp ^1) \cong {\mathbf H}^1({\rm End}^0({\mathfrak E})) \oplus H^1_{DH}(X).
$$
In particular, $Gr_1^W$ is a pure twistor structure of weight $1$. For $i=0,2$ we also have that 
$Gr_i^W$ is a pure twistor structure of weight $i$, in other words our tangent space with the given
weight filtration is a mixed twistor structure. 
\end{theorem}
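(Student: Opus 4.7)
My plan is to analyze each graded piece $Gr^W_k T_\rho$ for $k=0,1,2$ separately, using Lemmas~\ref{deflem} and~\ref{obslem} to describe them cohomologically, and then invoking the Sabbah--Mochizuki pure twistor theory for the main weight-$1$ piece. The trace decomposition $End(E) = \Oo \oplus End^0(E)$ will split the calculation into a ``scalar'' part (controlled by the rank-one theory of \cite{weighttwo} and the classical Hodge structure on $H^1_{dR}(X)$) and a ``trace-free'' part (controlled by \cite{Sabbah, Mochizuki}).

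For $Gr^W_0$, Lemma~\ref{obslem} gives $W_0 T_\rho \cong End(E_x)/\cc$. Since the framing $\beta : E_x \cong \cc^2$ is part of the data and persists unchanged along the preferred section---and the Deligne glueing passes through the Betti moduli space, which retains the same framing at $x$---the bundle $W_0 T_\rho$ is canonically identified with the constant bundle $\Oo_{\pp^1}^3$, semistable of slope~$0$. For $Gr^W_2$, Lemma~\ref{obslem} gives $0 \to W_2/W_1 \to \bigoplus_{y \in D} \Rr_y \to \cc \to 0$, where $\Rr_y$ records the two residue eigenvalues at $y$. By the rank-one analysis recalled in the introduction (formulas~\eqref{frakp},~\eqref{frake}), each KMS-spectrum element at $y$ assembles over $\pp^1$ into a copy of $\Oo_{\pp^1}(2)$, so $\bigoplus_y \Rr_y \cong \Oo_{\pp^1}(2)^{2|D|}$. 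The quotient map is the tangent version of the Fuchs constraint $\sum_y {\rm tr}({\rm res}_y(\nabla)) = -\lambda \deg(E)$, which globalizes to a surjection of $\Oo(2)$-bundles, so $W_2/W_1$ is semistable of slope~$2$.

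For $Gr^W_1$, decompose $End(E) = \Oo \oplus S$ with $S = End^0(E)$. The trace summand contributes the hypercohomology of $[\Oo \to \Omega^1_Y(\log D)]$, namely logarithmic de Rham cohomology $H^1_{dR}(X)$, whose canonical twistor structure is $H^1_{DH}(X)$ of weight~$1$. For the trace-free summand, Corollaries~\ref{case1cor} and~\ref{case2cor} provide, locally around each $\lambda_0 \in \pp^1$, a quasi-isomorphism between the deformation complex $[Q \to Q' \otimes \Omega^1_Y(\log D)]$ and the locally $L^2$ complex $[\Ll^2(S) \to \Ll^2(S \otimes \Omega^1_Y(\ast D))]$ whose hypercohomology defines the Sabbah--Mochizuki bundle ${\mathbf H}^1(End^0({\mathfrak E}))$. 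By \cite{Sabbah, Mochizuki}, this bundle is pure of weight~$1$, yielding both the claimed isomorphism and the purity property for $Gr^W_1$.

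The main obstacle will be globalization. The Corollaries~\ref{case1cor} and~\ref{case2cor} each describe the identification only on a small neighborhood $U^{(\lambda_0)}$, where the choice of Hecke representative and of quasi-parabolic eigenspace defining $\rho$ depends on whether Case~1 or Case~2 occurs. One must verify that these local identifications are compatible with (i) the Hecke-gauge groupoid $\srG_{\rm Hod}$, so that they descend to a well-defined morphism at the groupoid level; (ii) transitions between Case~1 and Case~2 neighborhoods; and (iii) the antipodal glueing between $\widetilde{M}_{\rm Hod}(X)$ and $\widetilde{M}_{\rm Hod}(\overline{X})$, i.e.\ compatibility under Riemann--Hilbert. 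Concretely this amounts to tracking how a Hecke modification $H_y$ intertwines a shift of the parabolic filtration on the $L^2$ sheaves with the corresponding action on the complex $[Q \to Q' \otimes \Omega^1_Y(\log D)]$, and checking that the resulting transition functions on $\pp^1$ agree with those defining ${\mathbf H}^1(End^0({\mathfrak E}))$.
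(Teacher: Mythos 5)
Your overall architecture matches the paper's: the same analysis of the three graded pieces, the same trace/trace-free splitting of $End(E)$ for $Gr_1^W$, the same appeal to Corollaries~\ref{case1cor} and~\ref{case2cor} followed by Sabbah--Mochizuki purity, and essentially the paper's arguments for $Gr_0^W$ (trivial bundle of framing deformations modulo scalars) and $Gr_2^W$ (kernel of a surjection of weight-$2$ twistor structures coming from the residue/trace constraint). Those two pieces are handled correctly.

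The gap is in item~(iii) of your ``main obstacle'' list, which you flag but do not resolve, and which is in fact the substantive content of the proof. The difficulty is not only that the local identifications of Corollaries~\ref{case1cor} and~\ref{case2cor} must be consistent under Hecke modifications and Case~1/Case~2 transitions; it is that the two bundles over $\pp^1$ being compared are glued across the two charts by entirely different mechanisms. The tangent bundle $Gr_1^W\rho^{\ast}T(M_{\rm DH}/\pp^1)$ is glued via Riemann--Hilbert and the topological identification $M_B(X)\cong M_B(\overline{X})$, whereas ${\mathbf H}^1({\rm End}^0({\mathfrak E}))$ is glued via the sesquilinear pairing of Sabbah's $\srR$-triple, defined over $|\lambda|=1$ with values in distributions and involving division of distributions and Mellin transforms near $D$. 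Your proposal to ``track how a Hecke modification intertwines a shift of the parabolic filtration'' addresses compatibilities within one chart but does not compare these two glueings. The paper's resolution is to observe that for classes in $H^1$ it suffices to check agreement at generic $\lambda$ on the unit circle, where the residue eigenvalues are non-resonant; there the relevant cohomology is the image of $H^1_c(X,S^{\nabla_\lambda})\rightarrow H^1(X,S^{\nabla_\lambda})$, so classes can be represented by compactly supported forms on $X$, and Sabbah's pairing formula reduces to the ordinary cup product under the de Rham/Betti comparison --- precisely the identification used for the tangent spaces. Without an argument of this kind you obtain the isomorphism only chart by chart, not as bundles over $\pp^1$, and hence cannot conclude the weight-$1$ purity of $Gr_1^W$.
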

\begin{proof}
In the previous subsections we have shown that there is a natural quasi-isomorphism between 
the complex calculating deformations of the quasi-parabolic logarithmic $\lambda$-connec\-tion,
and the complex of locally $L^2$ forms shown by \cite{Mochizuki} to calculate the twistor ${\mathbf H}^1$. This was
done for germs in the neighborhood of any $\lambda _0\in \aaaa^1$. 

It needs to be checked that this natural isomorphism is compatible with the glueing to the other
chart of the twistor $\pp^1$. In the Sabbah-Mochizuki theory, this glueing is done using the
sesquilinear pairing in Sabbah's definition of {\em $\srR$-triple} \cite{Sabbah}, 
whereas for the tangent of deformation theory it comes from comparison
with the Betti moduli spaces.

To make the comparison with \cite{Sabbah}, recall that an $\srR$-triple has two $\srR_X$-modules that are related
by a sesquilinear pairing. The first module is the minimal extension of the harmonic bundle (in this case, $S=End^0(E)$)
from $\aaaa^1 \times X$ to $\aaaa^1\times Y$, and the second one is the same for $\overline{X}$, but complex-conjugated
back to being an object over $\aaaa^1\times Y$. The sesquilinear pairing, defined over the unit circle $| \lambda | = 1$, 
takes values in distributions. The precise structure
is complicated near points of the divisor $D$ since the modules involve meromorphic sections, so this brings into play
the division of distributions \cite{SabbahDivision} and Mellin transform. Over points of $X$, the pairing is just the
same identification between local systems on $X$ and $\overline{X}$ that we are using. 

The higher direct image in \cite{Sabbah} is calculated using the Dolbeault resolution, then the pairing on the 
cohomology (i.e.\ higher direct image to a point)
involves wedging forms, pairing the $\srR_X$-module coefficients, and integrating \cite[\S 1.6.d]{Sabbah}. 

We can avoid having to look too closely at the behavior near singularities. This is because we need to understand the
Betti glueing vs the sesquilinear pairing, for classes in $H^1$.  It suffices to verify that the identifications are the same
for general values of $\lambda$ on the unit circle. We may therefore assume that the two residue eigenvalues at any
$y\in D$ don't differ by integer multiples of $\lambda$. 
Our cohomology space in question then has the property that
it is the image of the map 
$H^1_c(X, S^{\nabla _{\lambda}}) \rightarrow H^1(X,S^{\nabla _{\lambda}})$ 
from compactly supported cohomology to cohomology over $X$. We may therefore represent cohomology classes by
forms that are compactly supported on $X$, and pair them with other forms that are compactly supported on $X$
in order to check the identifications.

In this setting the pairing formula from \cite[\S 1.6.d]{Sabbah} is just the usual cup-product of cohomology classes via the
identification between de Rham cohomology of a $\lambda$-connection and Betti cohomology. That identification is
the one that occurs for the tangent spaces of our moduli spaces under the identification between tangent spaces, hence
deformation spaces, and cohomology spaces. This gives the compatibility. 

From the general theory of \cite{Sabbah,Mochizuki},
we get that $ {\mathbf H}^1({\rm End}^0({\mathfrak E})) $ is a pure twistor structure of weight $1$
meaning that as a bundle it is a direct sum of copies of $\Oo_{\pp^1}(1)$. We therefore obtain that required weight property
for $Gr_1^W\rho ^{\ast}T(M_{\rm DH} / \pp ^1) $. 

For the $Gr_0^W$ piece, it is easy to see that the deformations of change of framing give a trivial bundle over $\pp^1$
since the framing doesn't depend on anything. The space of deformations of change of framing,
globalized over the twistor line, is $End(\Oo _{\pp^1}^2) / H^0_{DH}(X)$
taking the quotient by the subspace $H^0_{DH}(X)=\Oo _{\pp^1}$ of scalar endomorphisms of the bundle. This has a weight $0$
twistor structure. 

For the $Gr_2^W$ piece, we refer to the discussion of \cite{weighttwo} for the weight two property of
the space of deformations of the residual data. Here again there is a modification by $H^2_{DH}(X)$,
namely we have an exact sequence
$$
0\rightarrow Gr_2^W \rightarrow \bigoplus _{y\in D} \Rr _{y,DH} \rightarrow H^2_{DH}(X)\rightarrow 0
$$
corresponding to the condition that the sums of all the residues should vanish. It is a condition happening on 
the determinant bundle. The morphism on the right is a morphism of weight $2$ twistor structures so
the kernel $Gr_2^W$ is a weight $2$ twistor structure. 

This concludes the proof that the full relative tangent space, along the preferred section, has a
mixed twistor structure with weights $0,1,2$. 
\end{proof}

It completes, in turn, the proof of Theorem \ref{maintwistor}. 

\bigskip

\noindent
{\bf Acknowledgements:}
This material is based upon work supported by a grant from the Institute for Advanced Study.
Supported by the Agence Nationale de la Recherche program 
3ia Côte d'Azur ANR-19-P3IA-0002, European Research Council Horizons 2020 grant 670624 (Mai Gehrke's DuaLL project) and the 
International Centre for Theoretical Sciences program ICTS/mbrs2020/02. I would like to thank the many colleagues who
have contributed,  through
numerous discussions and inspiring articles, to this work. 
I would like to thank Takuro Mochizuki, in addition to the previous, also for several improvements to a first draft of this work.

\

\noindent
Université Côte d'Azur, CNRS, LJAD, France

\noindent
\verb+carlos.simpson@univ-cotedazur.fr+

\end{document}